\newcommand\blfootnote[1]{%
  \begingroup
  \renewcommand\thefootnote{}\footnote{#1}%
  \addtocounter{footnote}{-1}%
  \endgroup
}
\newcommand{\rtp}{{\Bar{\otimes}}}
\newcommand{\rtpd}{{\rtp_\delta}}
\numberwithin{equation}{section}
\begin{document}
\newtheorem{theorem}{Theorem}[section]
\newtheorem{definition}[theorem]{Definition}
\newtheorem{lemma}[theorem]{Lemma}
\newtheorem{note}[theorem]{Note}
\newtheorem{corollary}[theorem]{Corollary}
\newtheorem{proposition}[theorem]{Proposition}
\renewcommand{\theequation}{\arabic{section}.\arabic{equation}}
\newcommand{\newsection}[1]{\setcounter{equation}{0} \section{#1}}
\title{Tensor Products of Ideals and Projection Bands}
\author{Mohamed Amine BEN AMOR\blfootnote{${}^\star$mohamedamine.benamor@ipest.rnu.tn} ${}^\star$\\ \"Omer GOK\blfootnote{${}^\dagger $ gok@yildiz.edu.tr}${}^\dagger $ \\ Damla YAMAN\blfootnote{${}^\ddagger$dyaman@yildiz.edu.tr}${}^\ddagger $ \\
\small{${}^\star$ Research Laboratory of Algebra, Topology, Arithmetic, and Order} \\ \small{ Department of Mathematics, Faculty of Mathematical, Physical and Natural Sciences of
Tunis} \\ \small{Tunis-El Manar University, 2092-El Manar, Tunisia } \\
\small{${}^\dagger$ ${}^\ddagger $  Department of Mathematics} \\ \small{Yildiz Technical University, Istanbul }
}
\date{\today}

\maketitle
\abstract{In this study, we prove among other results that the Dedekind completion of the Riesz tensor product of two order ideals in an Archimedean Riesz space $E$ is again an order ideal in $E$ and the Dedekind completion of the Riesz tensor product of two principal bands in an Archimedean Riesz space $E$ is again a principal band in $E$.}

\parindent=0cm
\parskip=0.5cm

\section{Introduction}
Tensor product of vector lattices and different structures has been a significant field of interest and has been studied and improved in time by authors like H.H.Schaefer, J.J. Grobler, C.C.A. Labuschagne and G. Buskes since D. H. Fremlin first introduced the Riesz tensor product. We mainly refer to \cite{ben2021riesz}, \cite{buskes2016tensor} and \cite{jaber2020fremlin} on the topic.
In \cite{fremlin1972tensor}, Fremlin proved the existence and uniqueness of the tensor product in the following theorem. (For other constructions see \cite{grobler1988tensor} or \cite{schaefer1980aspects}).

\begin{theorem}\label{fremlin}
Let $E$ and $F$ be Archimedean Riesz spaces. Then there is an Archimedean Riesz space $G$ and a Riesz bimorphism $\varphi\colon E\times F\rightarrow G$ such that 
\begin{itemize}
\item[{ (i)}] whenever $H$ is an Archimedean Riesz space and $\psi\colon E\times F\rightarrow H$ is a Riesz bimorphism, there is a unique Riesz homomorphism $T\colon G\rightarrow H$ such that $T\varphi=\psi$;
\item[{(ii)}] $\varphi$ induces an embedding $\hat\varphi\colon E \otimes F\to G$;
\item[{ (iii)} ] (ru-D) $\hat{\varphi}[E \otimes F]$ is dense in $G$ in the sense that for every $w\in G$, there exist $x_0\in E$ and $y_0\in F$ such that for every $\epsilon > 0$, there is an element $v\in \hat{\varphi}[E\otimes F]$ such that $|w-v|\le \epsilon \hat{\varphi}(x_0\otimes y_0)$;
\item[{(iv)}] if $w>0$ in $G$, then there exist $x\in E^+$ and $y\in F^+$ such that $0<\hat{\varphi}(x\otimes y)\le w$.  
\end{itemize}
\end{theorem}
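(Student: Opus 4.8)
The plan is to realize the pair $(G,\varphi)$ concretely inside a space of continuous functions and then pass to the Riesz subspace generated by the elementary tensors. First I would send $E$ and $F$ into their Dedekind completions and invoke the Maeda--Ogasawara--Vulikh representation, embedding $E^{\delta}$ as an order-dense Riesz subspace of $C_{\infty}(X)$ and $F^{\delta}$ as an order-dense Riesz subspace of $C_{\infty}(Y)$, with $X,Y$ extremally disconnected compact Hausdorff spaces. I then define $\varphi(x,y)$ to be the pointwise product $\widehat{x}\cdot\widehat{y}$, a well-defined element of $C_{\infty}(X\times Y)$ on the dense set where both factors are finite. Since multiplication by a fixed positive function preserves both $\vee$ and $\wedge$, the map $\varphi$ is a Riesz bimorphism; I let $G$ be the Riesz subspace of $C_{\infty}(X\times Y)$ generated by $\varphi(E\times F)$, which is Archimedean as a Riesz subspace of an Archimedean space, and $\widehat{\varphi}$ is the induced linear map on $E\otimes F$.

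Two of the four clauses are then comparatively routine. For (ii), I would write a given element of $E\otimes F$ as $\sum_{i=1}^{r}x_{i}\otimes y_{i}$ with $\{x_{i}\}$ and $\{y_{i}\}$ linearly independent; since the representations are injective Riesz homomorphisms the families $\{\widehat{x}_{i}\}$ and $\{\widehat{y}_{i}\}$ remain independent, so $\sum_{i}\widehat{x}_{i}(s)\widehat{y}_{i}(t)$ is not the zero function and $\widehat{\varphi}$ is injective. The uniqueness part of (i) is immediate: every element of $G$ is a finite supremum of finite infima of elements of $\widehat{\varphi}(E\otimes F)$, and a Riesz homomorphism is determined by its values on a generating subspace.

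The heart of the theorem is the \emph{existence} of $T$ in (i). The algebraic universal property of $\otimes$ supplies a linear $T_{0}\colon E\otimes F\to H$ with $T_{0}(x\otimes y)=\psi(x,y)$, and the task is to extend $T_{0}$ along $\widehat{\varphi}$ to a Riesz homomorphism on $G$. Using the description of $G$ as finite sups of finite infs, this reduces to a single order-transfer lemma: whenever $\bigwedge_{i}u_{i}\le\bigvee_{j}v_{j}$ in $G$ with $u_{i},v_{j}\in\widehat{\varphi}(E\otimes F)$, one must have $\bigwedge_{i}T_{0}u_{i}\le\bigvee_{j}T_{0}v_{j}$ in $H$; the special case is positivity preservation, $\sum_{k}\varphi(x_{k},y_{k})\ge 0 \Rightarrow \sum_{k}\psi(x_{k},y_{k})\ge 0$. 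My intended mechanism is to represent $H$ through $H^{\delta}$ in some $C_{\infty}(Z)$: each point $z$ yields a lattice homomorphism on $H$, so $(x,y)\mapsto\widehat{\psi(x,y)}(z)$ is a scalar-valued Riesz bimorphism on $E\times F$. Such a bimorphism factors as a product $\phi_{z}\otimes\rho_{z}$ of lattice homomorphisms (for fixed $y\ge 0$ the functional $\phi_z(\cdot)=\widehat{\psi(\cdot,y)}(z)$ is a lattice homomorphism, and all of these are proportional to a single one since a sum of two non-proportional lattice homomorphisms is not a lattice homomorphism), and $\phi_{z},\rho_{z}$ are realized up to nonnegative scalars by points $s_{z}\in X$, $t_{z}\in Y$. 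Evaluating the displayed inequality at $(s_{z},t_{z})$ and rescaling transfers it to each $z$, hence to $H$.

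This transfer step is the one I expect to be the \emph{main obstacle}. The factorization and point-realization are transparent when $E$ and $F$ carry enough real lattice homomorphisms, but for general Archimedean spaces---atomless spaces such as $L^{1}$ have no nonzero real lattice homomorphism---one is forced to work with the extended-real points of the Maeda--Ogasawara spaces and to control finiteness on the relevant dense ideals, and making the factorization rigorous there is delicate. A robust alternative that avoids representing $H$ is to prove Fremlin's intrinsic formula expressing $\left|\sum_{i}\varphi(x_{i},y_{i})\right|$, relatively uniformly, through finite lattice expressions in the $x_{i}\in E$ and $y_{i}\in F$ alone; being intrinsic, such a formula is automatically respected by any bimorphism $\psi$, which yields the positivity of $T_{0}$ directly, and establishing it by a relative-uniform approximation and induction is where the real work lies. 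Finally, for (iii) I would take an order unit $\varphi(x_{0}\otimes y_{0})$ for the finitely many generators of a given $w$ (with $x_{0},y_{0}$ built from the $\vee$ of the relevant $|x_{i}|,|y_{i}|$) and approximate the lattice expression by a single sum of elementary tensors controlled by that unit; and for (iv) I would first check that $\widehat{\varphi}(E\otimes F)$ is order dense in $G$, so any $w>0$ dominates a nonzero tensor, and then use the clopen box structure of $X\times Y$ together with the order density of $E$ in $E^{\delta}$ and $F$ in $F^{\delta}$ to replace it by a genuine positive elementary tensor $\varphi(x\otimes y)$ with $x\in E^{+}$, $y\in F^{+}$ and $0<\varphi(x\otimes y)\le w$, the only subtlety being to keep $x,y$ inside $E,F$ rather than their completions.
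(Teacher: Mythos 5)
The paper does not prove this theorem at all: it is quoted as Fremlin's result from \cite{fremlin1972tensor}, with the constructions of Schaefer \cite{schaefer1980aspects} and Grobler--Labuschagne \cite{grobler1988tensor} mentioned as alternatives. So there is no in-paper argument to compare against; your proposal can only be measured against those original proofs, and your route --- Dedekind completions, Maeda--Ogasawara representation of $E^{\delta}$ and $F^{\delta}$ in $C_{\infty}(X)$ and $C_{\infty}(Y)$, pointwise products in $C_{\infty}(X\times Y)$ --- is essentially the Grobler--Labuschagne construction rather than Fremlin's original argument.

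Measured on its own terms, the proposal has two genuine gaps. First, a technical one in the very construction of $G$: the product $X\times Y$ of extremally disconnected compacta is in general not extremally disconnected, so $C_{\infty}(X\times Y)$ is not a Riesz space --- it is not even closed under addition, since a sum of two densely finite continuous extended-real functions need not extend continuously to $X\times Y$. One must either pass to the absolute of $X\times Y$ or work with equivalence classes of continuous functions defined on dense open subsets; this is precisely where the published construction spends its effort, and your sketch assumes it silently. Second, and more seriously, the heart of the theorem --- the existence of $T$ in (i), equivalently the transfer $\sum_{k}\varphi(x_{k},y_{k})\ge 0 \Rightarrow \sum_{k}\psi(x_{k},y_{k})\ge 0$ --- is never actually established. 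Your first mechanism (factoring scalar Riesz bimorphisms through point evaluations as $\phi_{z}\otimes\rho_{z}$ realized by points $(s_{z},t_{z})$) fails for exactly the reason you concede: an atomless Archimedean Riesz space has no nonzero real-valued lattice homomorphisms, and the extended-real point evaluations on the Maeda--Ogasawara space of $H^{\delta}$ need not be finite on the relevant elements, so no such factorization exists in general. Your fallback --- Fremlin's intrinsic relative-uniform formula for $\left|\sum_{i}\varphi(x_{i},y_{i})\right|$ --- is indeed the correct device, but you explicitly leave its proof (``where the real work lies'') undone. Clauses (ii), (iii), (iv) and the uniqueness half of (i) are sketched adequately, but without the transfer lemma the proposal is a plan for a proof, not a proof.
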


The Archimedean Riesz space $G$ in Theorem \ref{fremlin} is called the \emph{Fremlin tensor product} of $E$ and $F$ and is denoted by $E\bar{\otimes} F$. Any Archimedean Riesz space paired with a Riesz bimorphism satisfying the universal property $(i)$ is Riesz isomorphic to $G$.

The Fremlin tensor product has the following additional properties:
\begin{itemize}
\item[(ru-D$)_+$] (Positive relative uniform density property) For $w\in {E\overline{\otimes}F}_+$, there exists an element $(x,y)\in E_+ \times F_+$ such that for every $\epsilon > 0$, there exists an element $v\in (E_x)_+ \otimes (F_y)_+$ with  $|w-v|\le \epsilon \hat{\varphi}(x\otimes y)$
\item[(PUM)] (Positive universal mapping property) Let $H$ be a relatively uniformly complete Archimedean Riesz space and $\psi : E\times F \to H$ be a positive bilinear mapping. Then there exists a unique positive linear mapping $\tau : E\overline{\otimes}F \to H$ such that $\tau \circ \varphi =\psi$.
\item[(B)] If $h\in E\overline{\otimes}F$,  there exists an element $(x, y)\in E_{+}\times F_{+}$  such that $|h|\leq x\otimes y$.
\end{itemize}

Recently, in \cite{BENAMOR2023127074}, the authors try to give a generalization of the weak mixing in the framework of Riesz spaces. They need the Riesz tensor product of two Dedekind complete Riesz spaces to be Dedekind complete and the Riesz tensor product of two principal bands in a Riesz space to be a principal band in the same Riesz space. Grobler in \cite{grob2022}, showed that the first statement is incorrect by giving a counterexample. Hence, 
he introduced the Dedekind complete tensor product as the Dedekind completion of the Riesz tensor product, that is,  $E\overline{\otimes}_{\delta}F= (E\overline{\otimes}F)^{\delta}$. He stated and proved the following theorem \cite[Theorem 5.2]{grob2022}  which will be useful in the sequel.


\begin{theorem}\label{tensded}
  Let $E,F$ be two Archimedean Riesz spaces. The tensor product $E\overline{\otimes}_{\delta}F$  is a Dedekind complete vector lattice with the following properties:
  \begin{itemize}
      \item[(D3) ]$E\otimes F$  is a vector subspace of $E\overline{\otimes}F$  and $E\overline{\otimes}F$  is a Riesz subspace of $E\overline{\otimes}_{\delta}F$.
      \item[(D5)]If $G$ is a Dedekind complete vector lattice and $\psi$ : $E\times F\rightarrow G$  is an order continuous Riesz bimorphism, then there exists an order continuous Riesz homomorphism $\tau$ : $E\overline{\otimes}_{\delta}F\rightarrow G$ such that $\tau\circ\sigma=\psi$.
      \item[(D4)]The Dedekind complete Riesz subspace generated in $E\overline{\otimes}_{\delta}F$ by $E\otimes F$  is equal to $E\overline{\otimes}_{\delta}F$.
      \item[(OD)] For every $0<h\in E\overline{\otimes}_{\delta}F$ there exists an element $(x, y)\in E_{+}\times F_{+}$ such that $0<x\otimes y\leq h$.
      \item[(B)]  If $h\in E\overline{\otimes}_{\delta}F$,  there exists an element $(x,y)\in E_{+}\times F_{+}$  such that $|h|\leq x\otimes y$.
      \item[(PUM)]Let $G$  be a Dedekind complete Archimedean Riesz space and let $\psi$ : $E\times F\rightarrow G$  be an order continuous positive bilinear mapping. Then there exists a unique order continuous positive linear mapping $\tau$ : $E\overline{\otimes}_{\delta}F\rightarrow G$ such that $\tau\circ\sigma=\psi$.
  \end{itemize}
\end{theorem}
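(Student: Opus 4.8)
The plan is to treat $E\overline{\otimes}_{\delta}F=(E\overline{\otimes}F)^{\delta}$ as an abstract Dedekind completion and to transport each property of the Fremlin tensor product $E\overline{\otimes}F$ (Theorem \ref{fremlin} together with the listed properties) across the canonical embedding. Write $A=E\overline{\otimes}F$ and recall the two structural facts I would use repeatedly: $A$ sits inside $A^{\delta}=E\overline{\otimes}_{\delta}F$ as an \emph{order dense} and \emph{majorizing} Riesz subspace, and $A^{\delta}$ is Dedekind complete. Order density means that for each $0<h\in A^{\delta}$ there is $g\in A$ with $0<g\le h$, and majorizing means that each $h\in A^{\delta}$ lies below some element of $A$; both are part of the defining properties of the Dedekind completion. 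Property (D3) is then immediate, since $E\otimes F$ is a vector subspace of $A$ via the embedding $\hat{\varphi}$ of Theorem \ref{fremlin}(ii), and $A$ is a Riesz subspace of $A^{\delta}$ by construction.

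For (OD) I would combine order density with Theorem \ref{fremlin}(iv): given $0<h\in A^{\delta}$, order density produces $g\in A$ with $0<g\le h$, and applying \ref{fremlin}(iv) to $g>0$ yields $(x,y)\in E_{+}\times F_{+}$ with $0<x\otimes y\le g\le h$, which is exactly (OD). Property (D4) then follows from the sharper observation that (OD) makes $E\otimes F$ itself order dense in $A^{\delta}$: every $0\le h\in A^{\delta}$ is the supremum in $A^{\delta}$ of the order-bounded set $\{u\in E\otimes F:0\le u\le h\}$, so any Dedekind complete Riesz subspace containing $E\otimes F$ must contain all such suprema and hence equals $A^{\delta}$. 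For (B) I would use the majorizing property together with property (B) of $E\overline{\otimes}F$: given $h\in A^{\delta}$, choose $g\in A$ with $0\le|h|\le g$, then apply (B) for the Fremlin tensor product to the positive element $g$ to obtain $(x,y)\in E_{+}\times F_{+}$ with $|h|\le g\le x\otimes y$.

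The substantial part is the universal properties (D5) and (PUM), which I would prove by a two-step ``factor, then extend'' scheme. First, since a Dedekind complete Riesz space is relatively uniformly complete, I factor $\psi$ through $A=E\overline{\otimes}F$: for (PUM) the Fremlin property (PUM) gives a unique positive linear $\tau_{0}\colon A\to G$ with $\tau_{0}\circ\varphi=\psi$, while for (D5) Theorem \ref{fremlin}(i) gives a Riesz homomorphism $\tau_{0}$ with the same property. Second, I extend $\tau_{0}$ along the order dense, majorizing embedding $A\hookrightarrow A^{\delta}$ to an operator $\tau$ on $A^{\delta}$ by $\tau(h)=\sup\{\tau_{0}(g):g\in A,\ g\le h\}$ for $h\ge 0$, extended linearly to all of $A^{\delta}$; the supremum exists because the majorizing property bounds the directed set $\{\tau_{0}(g)\}$ above in the Dedekind complete space $G$. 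Since $\sigma=\iota\circ\varphi$ with $\iota\colon A\hookrightarrow A^{\delta}$ and $\tau$ restricts to $\tau_{0}$ on $A$, one gets $\tau\circ\sigma=\tau_{0}\circ\varphi=\psi$, and uniqueness follows because two order continuous operators agreeing on the order dense subspace $A$ must agree everywhere.

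I expect the main obstacle to lie in the order continuity assertions of this last step, in two places. First one must verify that the factored map $\tau_{0}$ on $E\overline{\otimes}F$ is itself order continuous; this is where the order continuity hypothesis on $\psi$ has to be propagated through the tensor construction, using the density property (ru-D)$_{+}$ and the bound (B) to reduce a decreasing net $w_{\alpha}\downarrow 0$ in $A$ to controllable approximations in $E\otimes F$. Second one must show that the supremum-defined extension $\tau$ preserves arbitrary order limits and respects the lattice operations, which is the standard but delicate verification that an order continuous positive operator into a Dedekind complete space extends order continuously to the Dedekind completion, and that the extension is a Riesz homomorphism whenever $\tau_{0}$ is. All remaining algebraic checks (additivity, homogeneity, and compatibility with $\varphi$) are routine once the extension is known to be well defined and order continuous.
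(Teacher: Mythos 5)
A preliminary remark: the paper does not prove this statement at all — it is quoted verbatim from Grobler (\cite[Theorem 5.2]{grob2022}) and used as a black box — so your proposal can only be judged on its own merits, not against an in-paper argument. On those merits, the soft parts of your plan are correct: (D3) holds by construction of the completion; (OD) follows from order density of $E\overline{\otimes}F$ in $(E\overline{\otimes}F)^{\delta}$ together with Theorem \ref{fremlin}(iv); (B) follows from the majorizing property together with Fremlin's (B); and (D4) follows from your supremum representation, although you should make explicit the Archimedean argument showing $h=\sup\{u\in E\otimes F:0\le u\le h\}$ (the set is invariant under adding a fixed positive tensor below $h-h\wedge w$ for any competing upper bound $w$, so $nd\le h$ for all $n$ forces the contradiction).

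The genuine gap is in (D5) and (PUM): your ``factor, then extend'' scheme is the natural skeleton, but the step you explicitly defer — proving that the map $\tau_{0}\colon E\overline{\otimes}F\to G$ obtained from Fremlin's universal property is \emph{order continuous} whenever the bimorphism $\psi$ is — is not a verification to be postponed; it is the mathematical content of the theorem. Fremlin's theorem gives $\tau_{0}$ only as a Riesz homomorphism (resp.\ positive linear map), and order continuity of $\psi$ is a statement about nets of elementary tensors, whereas a decreasing net $w_{\alpha}\downarrow 0$ in $E\overline{\otimes}F$ consists of lattice combinations of tensors; bridging that gap via (ru-D)$_{+}$ and (B), as you suggest, is precisely where all the difficulty sits, and nothing in the listed properties makes it routine. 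As a gauge of how nontrivial such order-continuity assertions are, the paper devotes its entire Section 3 (Theorem \ref{oc}, resting on the Yosida-representation Lemma \ref{Yoshida}) to proving only the far weaker statement that the canonical bimorphism $(x,y)\mapsto x\otimes y$ is order continuous. Your second deferred step — extending an order continuous positive operator (or Riesz homomorphism) from an order dense majorizing Riesz subspace into a Dedekind complete range by the formula $\tau(h)=\sup\{\tau_{0}(g):g\in A,\ 0\le g\le h\}$, with additivity, the lattice property, order continuity and uniqueness — is indeed standard and citable, so I would not count it against you. But as written, your proof of (D5)/(PUM) reduces the theorem to an unproven lemma that is essentially \cite[Theorem 5.1]{grob2022} itself.
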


Most recently, G. Buskes and P. Thorn in the preprint \cite{buskes2022} present a counterexample for the tensor product of two ideals of Archimedean Riesz spaces $E$ and $F$ being an ideal in $E\rtp F$. Then, it takes away the hope to obtain that the Riesz tensor product of principal bands is again a principal band. 

In the light of these recent works, we try to present an overall look at the subject, and we show that in the case of ideals and principal bands, it is more convenient to deal with the Dedekind complete tensor product rather than the Fremlin one. We will prove that the Dedekind complete tensor product of two ideals in a Riesz space is again an ideal and the Dedekind complete tensor product of two principal bands is again a principal band in the same Riesz space.

\section{Preliminaries}
Some fundamental notions and tools are given in this section. The reader can find further information with details in \cite{aliprantis2006positive} or in \cite{kusraev2000dominated} or in \cite{MR0511676}.

A real vector space $E$ equipped with an order relation $\ge$ compatible with the algebraic structure of $E$ which has the following properties is called an \emph{ordered vector space}.
\item[i.] If $u\ge v$ for $u,v\in E$ then $u+w \ge v+w$ for all $w\in E$.
\item[ii.] If $u\ge v$ for $u,v\in E$ then $\lambda u \ge \lambda v$ for each $\lambda \ge 0 $.

An ordered vector space $E$ is called a \emph{Riesz space} if the supremum (or infimum) of every nonempty finite subset of $E$ exists in $E$. A Riesz space $E$ is called \emph{Archimedean} if $x,y\in E^+$ and $nx \le y$ for each $n\in \mathbb{N}$ imply $x=0$.
A Riesz space is called \emph{Dedekind complete} whenever every nonempty subset which is bounded above has a supremum (or every nonempty subset which is bounded below has an infimum).
A vector subspace $A$ of an ordered vector space $E$ is said to be \emph{majorizing} in $E$ whenever for each $x\in E$ there exists some $y\in A$ such that $x\le y$.
A Dedekind complete Riesz space $L$ is called the \emph{Dedekind completion} of the Riesz space $E$ whenever $E$ is Riesz isomorphic to a majorizing order dense Riesz subspace of $L$. Every Archimedean Riesz space has a unique (up to a lattice isomorphism) Dedekind completion \cite[Theorem 2.24]{aliprantis2006positive}.

Recall that a \emph{Riesz subspace} of a Riesz space $E$ is a subspace of $E$ that is closed under the lattice operations in $E$.
A subset $A$ of a Riesz space is called \emph{solid} whenever $|x|\leq |y|$ and $y\in A$ implies $x\in A$. A solid vector subspace of a Riesz space is called an \emph{ideal}. It is easy to see that every ideal is a Riesz subspace.
The ideal generated by a vector $x$ is defined as
\[
E_x=\{y\in E\ :   \text{ there exist } \lambda >0 \text{  such that  } |y|\le \lambda |x| \}
\]
and is called a \emph{principal ideal}.

 
A Riesz subspace $A$ of a Riesz space $E$ is called \emph{order dense} in $E$ whenever for every $0<x \in E$ there exists some $y\in A$ with $0< y\le x$.

A net $\{x_\alpha\}$ in a Riesz space is said to be \emph{order convergent} to a vector $x$ (denoted by $x_\alpha \xrightarrow{o} x$), whenever there exist $\alpha_0$ and another net $\{y_\beta\}$ which satisfies $y_\beta \downarrow 0$ and $|x_\alpha - x|\le y_\beta$ for all $\alpha \geq \alpha_0$. A subset $A$ of an Archimedean Riesz space $E$ is called \emph{order closed} whenever any convergent net $\{x_\alpha\}$ in $A$ order converges to a vector $x$ in $A$.
An order closed ideal is called a \emph{band}. The band generated by a vector $x$ in a Riesz space $E$ is called a \emph{principal band} and given by 
\[
B_x=\{y\in E : |y|\wedge n|x|\uparrow |y|\}.
\]

The disjoint complement $A^d$ of a nonempty subset $A$ of a Riesz space $E$ is defined by 

\[ A^d = \{u\in E : u\wedge v =0 \text{ for all } v\in A \}.
\]
 Note that $A\subseteq A^{dd}$ where $A^{dd}=(A^d)^d$. In an Archimedean Riesz space, $A^{dd}$ is the band generated by $A$ and if $A$ is a band  then $A= A^{dd}$.

A band $B$ in a Riesz space $E$ that satisfies $E=B\oplus B^d$ is called a \emph{projection band}.
An operator $P: V\to V$ on a vector space $V$ is called a \emph{projection} if $P^2=P$. A projection $P$ defined on a Riesz space which is at the same time a positive operator is called a \emph{positive projection}. Let $B$ be a projection band in a Riesz space $E$. Then every $x\in E$ has a unique decomposition $x=x_1 + x_2$ where $x_1 \in B$ and $x_2\in B^d$. In this case the projection $P_B: E\to E$ defined with 
\[ P_B(x)=x_1 \] is called a \emph{band projection}.

A vector $e>0$ in a Riesz space $E$ is called a \emph{weak order unit} whenever the band generated by $e$ is equal to $E$, i.e., $B_e=E$. Every positive vector of a Riesz space is a weak order unit in the band it generates.

Let $e$ be a positive vector of a Riesz space $E$. A positive vector $x$ in $E$ is called the \emph{component} of $e$ whenever $x\wedge (e-x)=0$.

A Riesz subspace $A$ of a Riesz space $E$ is called \emph{regular} if the embedding of $A$ into $E$ preserves the arbitrary suprema and infima.

Next, we give some fundamental results that are of significance for our work. See \cite{aliprantis2003locally}, \cite{aliprantis2006positive} and \cite{zaanen2012introduction} for the proofs.

 \begin{theorem}\label{regular} \cite[Theorem 1.23]{aliprantis2003locally} 
 Every order dense Riesz subspace of a Riesz space is a regular Riesz subspace.
 \end{theorem}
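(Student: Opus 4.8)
The plan is to reduce the statement to a single concrete claim about infima that decrease to zero, and then to discharge that claim using order density. Write $A$ for the given order dense Riesz subspace of the Riesz space $E$. Regularity asks that whenever a subset $D\subseteq A$ has a supremum or an infimum computed inside $A$, the same element serves as the supremum or infimum computed inside $E$. Since $A$ is a linear subspace it is closed under $x\mapsto -x$, and negation reverses order, so $\sup_A D = -\inf_A(-D)$ and likewise in $E$; hence it suffices to treat infima. Moreover, if $\inf_A D = u$ exists, then translating by $u\in A$ (an order isomorphism of both $A$ and $E$, since $u\in A\subseteq E$) reduces matters to the set $D-u$, whose infimum in $A$ is $0$. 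Thus everything comes down to the following: if $D\subseteq A$ satisfies $\inf_A D = 0$, then $\inf_E D = 0$.

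I would prove this core claim directly. First observe that $\inf_A D = 0$ forces $0$ to be a lower bound of $D$, so $D\subseteq A^+$; in particular $0$ is already a lower bound of $D$ in $E$. It remains to show that no element of $E$ strictly above $0$ can be a lower bound, more precisely, that every lower bound $z\in E$ of $D$ satisfies $z\le 0$. Given such a $z$, consider $z^+ = z\vee 0$. From $z\le d$ and $0\le d$ for all $d\in D$ one gets $z^+\le d$ for all $d$. If $z^+ > 0$, then order density of $A$ in $E$ produces some $y\in A$ with $0<y\le z^+$; consequently $y\le z^+\le d$ for every $d\in D$, so $y$ is a lower bound of $D$ lying in $A$, whence $y\le \inf_A D = 0$. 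This contradicts $y>0$, so $z^+ = 0$, that is $z\le 0$. Therefore $0$ is the greatest lower bound of $D$ in $E$, i.e. $\inf_E D = 0$.

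The only genuinely load-bearing step is the order-density argument in the second paragraph, and the one point demanding care is the bookkeeping around the reductions: checking that translation by $u$ and negation are order isomorphisms carrying subsets of $A$ to subsets of $A$, so that the existence of the extremum in $A$ transfers correctly, and confirming that an infimum equal to $0$ indeed places the whole set in the positive cone (which is exactly what makes the passage $z\le d\Rightarrow z^+\le d$ available). None of these is deep, but they must be assembled in the right order so that the single density estimate does all the real work.
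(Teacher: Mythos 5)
Your proof is correct. There is nothing in the paper to compare it against: the paper states this theorem purely as a citation to Aliprantis--Burkinshaw \cite[Theorem 1.23]{aliprantis2003locally} and gives no proof of its own. Your argument is essentially the standard one from that source: reduce, via negation and translation by the infimum (both legitimate, since $A$ is a linear subspace), to the single claim that $\inf_A D = 0$ implies $\inf_E D = 0$, and then kill any candidate lower bound $z$ by applying order density to $z^+$ to manufacture an element $y \in A$ with $0 < y \le z^+$, which would be a strictly positive lower bound of $D$ inside $A$ --- contradicting $\inf_A D = 0$. The bookkeeping you flag (that $0$ being the infimum in $A$ forces $D \subseteq A^+$, which is what licenses the passage from $z \le d$ to $z^+ \le d$) is handled correctly, and it is worth noting that your argument never invokes the Archimedean property, so it establishes the statement exactly as the paper phrases it.
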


 \begin{theorem}\cite[Theorem 1.24]{aliprantis2003locally}
 Every ideal $I$ of a Riesz space $E$ is order dense in $I^{dd}$. In particular, an ideal $I$ is order dense in $E$ if and only if $I^d = \{0\}$.
 \end{theorem}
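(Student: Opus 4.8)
The plan is to prove the order-density statement directly from the definition and then read off the ``in particular'' clause as an immediate consequence. To show that $I$ is order dense in $I^{dd}$, I would fix an arbitrary $0 < x \in I^{dd}$ and hunt for some $y \in I$ with $0 < y \le x$. The natural candidates are the elements $x \wedge |z|$ as $z$ ranges over $I$: each is nonnegative, dominated by $x$, and also dominated by $|z| \in I$, so by solidity of the ideal $I$ it lies in $I$ itself. Thus it suffices to produce a single $z \in I$ with $x \wedge |z| > 0$, and then $y := x \wedge |z|$ does the job.

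The key step is to rule out the alternative that $x \wedge |z| = 0$ for every $z \in I$. If that held, then $x$ would be disjoint from every element of $I$, i.e.\ $x \in I^d$. But by hypothesis $x \in I^{dd} = (I^d)^d$, so $x$ is disjoint from every element of $I^d$, in particular from itself, forcing $|x| \wedge |x| = |x| = 0$ and contradicting $x > 0$. Hence some $z \in I$ satisfies $x \wedge |z| > 0$, and we obtain the required $y$ with $0 < y \le x$.

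This disjointness observation---that $A^d \cap A^{dd} = \{0\}$ for any subset $A$, simply because a self-disjoint vector is zero---is really the whole content, and I expect it to be the only nontrivial point; everything else is bookkeeping with solidity of $I$. I also note that the argument never invokes the Archimedean hypothesis, so it holds in an arbitrary Riesz space.

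For the final equivalence I would argue as follows. If $I^d = \{0\}$, then $I^{dd} = \{0\}^d = E$, and the first part gives order density of $I$ in $E$ at once. Conversely, suppose $I$ is order dense in $E$ and pick $u \in I^d$ with $u > 0$; density yields $y \in I$ with $0 < y \le u$, but $u \in I^d$ forces $u \wedge y = 0$, while $y \le u$ gives $u \wedge y = y > 0$, a contradiction. Hence $I^d = \{0\}$, which completes the proof.
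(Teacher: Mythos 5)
Your proof is correct, and it is essentially the standard argument for this fact: the paper itself gives no proof (it quotes the result from Aliprantis--Burkinshaw, Theorem 1.24), and the proof there proceeds exactly as you do, using that $I^d \cap I^{dd} = \{0\}$ to find $z \in I$ with $x \wedge |z| > 0$ and then invoking solidity. Your observation that the Archimedean hypothesis is never needed is also accurate.
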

\begin{theorem} \cite[Theorem 1.46]{aliprantis2003locally}
Let $E$ be a Riesz space. Then the following assertions hold.
\begin{enumerate}
    \item A band $B$ of $E$ is a projection band if and only if for each $u\in E^+$ the supremum $ \sup \{ v\in B:  0\le u\le v \}$ exists in $E$. Moreover, if $B$ is a projection band then for each $u\in E^+$ 
    $$P_B(u)=\{ v\in B:  0\le u\le v \}.$$
    \item A principal band $B_u$ of $E$ is a projection band if and only if for each $v\in E^+$ the supremum
    $\sup \{ v\wedge n|u| :  n\in \mathbb{N}\}$ exists in $E$. Moreover, if $B_u$ is a projection band then for each $v\in E^+$ 
    $$P_u(v)=\{ v\wedge n|u|:  n\in \mathbb{N} \}.$$
\end{enumerate}
\end{theorem}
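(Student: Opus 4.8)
The plan is to prove the two assertions in turn, deriving the second from the first together with the explicit description of a principal band recalled above. Throughout I would use that a band is by definition an order closed ideal, that the band projection onto a projection band is positive, and that $B \cap B^d = \{0\}$ in any Riesz space.

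For the first assertion, the forward implication is the easy direction. Assuming $B$ is a projection band, the band projection $P_B$ is a positive idempotent with range $B$, so for $u \in E^+$ the vector $P_B(u)$ satisfies $P_B(u) \in B$ and $0 \le P_B(u) \le u$ (the latter because $u - P_B(u) = P_{B^d}(u) \ge 0$). Hence $P_B(u)$ lies in the set $\{v \in B : 0 \le v \le u\}$; and for any other $v$ in this set one has $v = P_B(v) \le P_B(u)$ by positivity and idempotency of $P_B$, so $P_B(u)$ is in fact the supremum of that set. This simultaneously shows the supremum exists and yields the stated formula.

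The converse is where the real content lies. Given that $s(u) := \sup\{v \in B : 0 \le v \le u\}$ exists for every $u \in E^+$, I would set $u_1 = s(u)$ and $u_2 = u - u_1$ and prove that $u = u_1 + u_2$ is a decomposition with $u_1 \in B$ and $u_2 \in B^d$. That $u_1 \in B$ follows because the defining set is directed upward and $B$ is order closed, so its supremum stays in $B$. The crucial step is $u_2 \in B^d$: for $v \in B^+$ put $w = u_2 \wedge v$; then $w \in B$ (as $B$ is an ideal and $0 \le w \le v$) and $u_1 + w \in B$ with $0 \le u_1 + w \le u$, so the defining property of the supremum forces $u_1 + w \le u_1$, i.e. $w = 0$. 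This gives $u_2 \perp v$ for all $v \in B^+$, hence $u_2 \in B^d$. Extending to arbitrary $u \in E$ via $u = u^+ - u^-$ yields $E = B + B^d$, and since $B \cap B^d = \{0\}$ this is a direct sum, so $B$ is a projection band. I expect this extraction of the disjoint complement to be the main obstacle; checking afterwards that $u \mapsto s(u)$ is additive and extends to a linear band projection is then routine.

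For the second assertion I would specialize to $B = B_u$ and match the two suprema. Using $B_u = \{y : |y| \wedge n|u| \uparrow |y|\}$, observe that each $v \wedge n|u|$ lies in $\{w \in B_u : 0 \le w \le v\}$, so $\sup_n (v \wedge n|u|) \le \sup\{w \in B_u : 0 \le w \le v\}$ whenever the right-hand side exists. Conversely, any $w \in B_u$ with $0 \le w \le v$ satisfies $w = \sup_n(w \wedge n|u|) \le \sup_n(v \wedge n|u|)$, so $\sup_n(v \wedge n|u|)$ is an upper bound for that set; moreover it belongs to $B_u$, since $B_u$ is order closed and $v \wedge n|u| \uparrow$ to it. Thus the existence of either supremum forces the existence of the other and the two coincide, so applying the first assertion to $B_u$ completes the proof and identifies $P_u(v)$ with $\sup_n(v \wedge n|u|)$.
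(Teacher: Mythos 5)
There is nothing in the paper to compare your argument against: this theorem appears in the Preliminaries as a quoted result from \cite{aliprantis2003locally} (Theorem 1.46 there), and the paper explicitly refers the reader to the literature for proofs of these facts. Judged on its own, your proof is correct, and it is essentially the standard textbook argument: in the forward direction, positivity and idempotency of $P_B$ identify $P_B(u)$ as the maximum of $\{v \in B : 0 \le v \le u\}$; in the converse, you set $u_1 = \sup\{v \in B : 0 \le v \le u\}$, use upward directedness of that set plus order closedness of $B$ to get $u_1 \in B$, and the trick $w = (u-u_1) \wedge v$, $u_1 + w \le u$, to force $u - u_1 \in B^d$; the principal-band case then reduces to the first part by comparing the two suprema. (Once $E = B + B^d$ and $B \cap B^d = \{0\}$ are in hand, $B$ is a projection band by definition, so your closing remark about verifying additivity of $u \mapsto s(u)$ is not even needed.)

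Two small points. First, you silently and correctly repaired two misprints in the statement as given: the set should be $\{v \in B : 0 \le v \le u\}$ rather than $\{v \in B : 0 \le u \le v\}$, and both displayed ``Moreover'' formulas are missing a $\sup$. Second, in the principal-band part, the sentence ``so $\sup_n(v \wedge n|u|) \le \sup\{w \in B_u : 0 \le w \le v\}$ whenever the right-hand side exists'' does not by itself yield existence of the left-hand supremum. The clean formulation of what your computation actually shows is that the two families have the same upper bounds: every upper bound of the set bounds each $v \wedge n|u|$ (since these lie in the set), and every upper bound $t$ of $\{v \wedge n|u| : n \in \mathbb{N}\}$ bounds each $w$ in the set, because $w = \sup_n(w \wedge n|u|)$ and $w \wedge n|u| \le v \wedge n|u| \le t$. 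Hence either supremum exists if and only if the other does, and they coincide; this is exactly your argument with a general upper bound $t$ in place of $\sup_n(v \wedge n|u|)$, and it closes the only phrasing gap in the proposal.
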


\begin{theorem}\cite[Theorem 12.2]{zaanen2012introduction}
Let $E$ be a Dedekind complete Riesz space. Then every band $B$ in $E$ is a projection band.
\end{theorem}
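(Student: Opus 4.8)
The plan is to build the band projection onto $B$ by hand and then read off the decomposition $E = B \oplus B^d$. First I would reduce to positive vectors: since every $x \in E$ splits as $x = x^+ - x^-$, it suffices to show that each $u \in E^+$ can be written as $u = u_1 + u_2$ with $u_1 \in B$ and $u_2 \in B^d$, after which the general case follows by subtracting the decompositions of $x^+$ and $x^-$.

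Fix $u \in E^+$ and consider the set $D = \{v \in B : 0 \le v \le u\}$. It is nonempty (it contains $0$) and bounded above by $u$, so by Dedekind completeness $u_1 := \sup D$ exists in $E$. Because $B$ is an ideal, $D$ is closed under finite suprema and hence upward directed, so viewing its elements as a net we have $v \uparrow u_1$; since $B$ is a band and therefore order closed, I conclude $u_1 \in B$.

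The heart of the argument is to show that $u_2 := u - u_1$, which is positive since $u_1 \le u$, lies in $B^d$. For this I would take an arbitrary $w \in B^+$ and examine the element $u_1 + (u_2 \wedge w)$. As $0 \le u_2 \wedge w \le w \in B$ and $B$ is an ideal, this element lies in $B$; it is positive and satisfies $u_1 + (u_2 \wedge w) \le u_1 + u_2 = u$, so it belongs to $D$. Consequently $u_1 + (u_2 \wedge w) \le \sup D = u_1$, which forces $u_2 \wedge w = 0$, and since $w \in B^+$ was arbitrary we get $u_2 \in B^d$. This disjointness verification is the step I expect to be the main obstacle, since it is where the defining supremum property of $u_1$ is used in an essential way; the remaining points are bookkeeping.

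Finally I would assemble the pieces: the positive-part reduction yields $E = B + B^d$, and since any vector in $B \cap B^d$ is disjoint from itself and hence zero, the sum is direct, so $E = B \oplus B^d$ and $B$ is a projection band. Alternatively, one could bypass the explicit construction by invoking the characterization recorded above, namely that a band $B$ is a projection band precisely when $\sup\{v \in B : 0 \le v \le u\}$ exists in $E$ for every $u \in E^+$; Dedekind completeness supplies exactly this supremum, giving the result immediately.
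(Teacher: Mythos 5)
Your proof is correct and is essentially the standard argument behind this result: the paper itself states the theorem without proof (citing Zaanen), and your construction of $u_1 = \sup\{v \in B : 0 \le v \le u\}$ via Dedekind completeness, followed by the verification that $u - u_1 \in B^d$ using the element $u_1 + \left((u-u_1) \wedge w\right)$, is precisely the classical textbook proof. Your alternative closing observation is also apt, since the characterization of projection bands quoted in the paper's preliminaries (Theorem 1.46 of Aliprantis--Burkinshaw, where the displayed set should read $\{v \in B : 0 \le v \le u\}$) reduces the theorem to the existence of exactly the supremum that Dedekind completeness provides.
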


Throughout this paper all Riesz spaces will be assumed to be Archimedean.

\section{Order Continuity of the Riesz Tensor Product}

In this section we prove that the embedding of $E \times F$ in $E \bar{\otimes} F$ is order continuous. Our alternative proof will precise the proof in \cite[3.4]{grob2022}. Yoshida proved in his early work \cite{yosida194270} the following representation theorem which will be useful for our work. (For further details about the representation theorem see \cite{groenewegen2016spaces}, \cite{MR0511676} and \cite{yosida194270}).

\begin{theorem}\label{yos}
Let $E_u$ be a Dedekind complete Riesz space with a strong unit $u$. Then there are a compact topological space $X$ and a Riesz isomorphism $\varphi:\ E_u \to C(X)
$ such that\[\begin{array}{lcl}
\varphi(u)&=&\mathds{1}
\end{array}
\]
\end{theorem}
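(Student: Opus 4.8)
The plan is to realize $E_u$ as the continuous functions on its space of normalized Riesz homomorphisms, following the classical Kakutani--Krein--Yoshida scheme. First I would equip $E_u$ with the \emph{order unit norm}
\[
\norm{x}_u = \inf\{\lambda>0 : \abs{x}\le \lambda u\},
\]
which is finite for every $x$ because $u$ is a strong unit, and which is genuinely a norm because $E_u$ is Archimedean. Since $E_u$ is Dedekind complete it is uniformly complete, so $(E_u,\norm{\cdot}_u)$ is a norm-complete Riesz space, i.e.\ an AM-space with unit $u$; this completeness is what will later force the representation to be onto, rather than merely an embedding.

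Next I would introduce the candidate base space
\[
X = \{\omega\colon E_u\to\R \text{ a Riesz homomorphism with } \omega(u)=1\},
\]
topologized by the weak-$*$ topology it inherits as a subset of the dual $E_u^\ast$. Each such $\omega$ satisfies $\abs{\omega(x)}\le \norm{x}_u$, so $X$ sits inside the dual unit ball; one checks that $X$ is weak-$*$ closed there, whence $X$ is compact by the Banach--Alaoglu theorem, and it is Hausdorff because distinct functionals are separated by evaluation at some $x$. I would then define the evaluation map $\varphi\colon E_u\to C(X)$ by $\varphi(x)(\omega)=\omega(x)$. Linearity is clear, and since every $\omega\in X$ preserves the lattice operations, $\varphi(x\vee y)=\varphi(x)\vee\varphi(y)$, so $\varphi$ is a Riesz homomorphism; moreover $\varphi(u)(\omega)=\omega(u)=1$ gives exactly $\varphi(u)=\mathds{1}$.

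It remains to see that $\varphi$ is an isometric Riesz isomorphism onto all of $C(X)$. Injectivity amounts to the Riesz homomorphisms separating the points of $E_u$: for $0<x$ one must produce some $\omega\in X$ with $\omega(x)>0$, which I would obtain by a Hahn--Banach/Krein--Milman argument, realizing the normalized homomorphisms as the extreme points of the state space $\{0\le\phi\in E_u^\ast : \phi(u)=1\}$ and showing this set is nonempty and large enough that $\sup_{\omega\in X}\abs{\omega(x)}=\norm{x}_u$; this simultaneously yields that $\varphi$ is an isometry from $(E_u,\norm{\cdot}_u)$ into $(C(X),\norm{\cdot}_\infty)$. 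Finally, the image $\varphi[E_u]$ is a Riesz subspace of $C(X)$ that contains the constants (as $\varphi(u)=\mathds{1}$) and separates the points of $X$ (by the very definition of the weak-$*$ topology), so by the lattice version of the Stone--Weierstrass theorem it is dense in $C(X)$; being the isometric image of the complete space $E_u$, it is also closed, and therefore equals $C(X)$.

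The step I expect to be the main obstacle is the production of enough Riesz homomorphisms, that is, establishing both that $X$ separates the points of $E_u$ and that the order unit norm is recovered as $\norm{x}_u=\sup_{\omega\in X}\abs{\omega(x)}$. Everything else is essentially formal, but this step is precisely where the Archimedean property and the Krein--Milman extremality argument (identifying the extreme states with lattice homomorphisms) are indispensable.
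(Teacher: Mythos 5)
The paper does not prove this statement at all: Theorem \ref{yos} is imported as Yosida's classical representation theorem, with the reader referred to the cited literature (Yosida's 1942 paper and the books of Groenewegen--van Rooij and Luxemburg--Zaanen) for details. So there is no internal proof to compare against; what you have written is the standard Kakutani--Krein--Yosida argument that those references contain, and it is correct. Your chain of steps is the right one: the order unit norm is a norm by the Archimedean property; Dedekind completeness gives relative uniform completeness, hence norm completeness, so $(E_u,\|\cdot\|_u)$ is an AM-space with unit; the normalized Riesz homomorphisms form a weak-$*$ compact Hausdorff set $X$ by Banach--Alaoglu; evaluation is a Riesz homomorphism carrying $u$ to $\mathds{1}$; the Hahn--Banach/Krein--Milman identification of the extreme states with the normalized lattice homomorphisms (together with Bauer's maximum principle) gives $\|x\|_u=\sup_{\omega\in X}|\omega(x)|$, hence injectivity and isometry; and the lattice Stone--Weierstrass theorem plus closedness of the isometric image of a complete space gives surjectivity. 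You also correctly locate the genuine content in the ``enough homomorphisms'' step; deferring it to the classical extreme-state characterization is appropriate. Two small observations worth recording: your argument uses Dedekind completeness only through the Archimedean property and uniform completeness, so it actually proves the more general Kakutani theorem for uniformly complete spaces with strong unit, of which the stated theorem is a special case; and under the full Dedekind completeness hypothesis the space $X$ you construct is automatically extremally disconnected (since $C(X)$ inherits Dedekind completeness), although the statement does not require this.
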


\begin{lemma}\label{Yoshida}
Let $E$ and $F$ be Archimedean Riesz spaces, $x$ and $x'$ be strictly positive elements in $E$ and $y, y'$ be strictly positive elements in $F$. If $x \otimes y \leq x' \otimes y'$, then there exist two strictly positive real numbers $\alpha$ and $\beta$ such that  $x \leq \alpha x'$ and $y \leq \beta y'$.
\end{lemma}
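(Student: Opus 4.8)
The plan is to push the problem into spaces of continuous functions via the Yoshida representation, where the relation $x\otimes y\le x'\otimes y'$ collapses to a pointwise inequality between products of functions, from which $\alpha$ and $\beta$ can simply be read off. First I would pass to principal ideals carrying strong units. Setting $e=x\vee x'$ in $E$ and $f=y\vee y'$ in $F$, we have $x,x'\in E_e$ and $y,y'\in F_f$, and $e,f$ are strong units of the principal ideals $E_e,F_f$. Their Dedekind completions $(E_e)^\delta$ and $(F_f)^\delta$ are Dedekind complete with strong units, so Theorem \ref{yos} furnishes Riesz isomorphisms $(E_e)^\delta\cong C(X)$ and $(F_f)^\delta\cong C(Y)$ for suitable compact spaces $X,Y$. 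Writing $\hat x,\hat{x'}\in C(X)$ and $\hat y,\hat{y'}\in C(Y)$ for the images of $x,x',y,y'$, note that these images are positive and nonzero, since the canonical embeddings of $E_e$ and $F_f$ into their completions are injective Riesz homomorphisms.

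Next I would transport the hypothesis into this picture. The key structural input is that the Fremlin construction is functorial with respect to Riesz embeddings: the natural map $E_e\overline{\otimes}F_f\to E\overline{\otimes}F$ is a Riesz isomorphism onto a Riesz subspace, hence an order embedding. Since $x\otimes y$ and $x'\otimes y'$ involve only elementary tensors of elements of $E_e$ and $F_f$, the inequality already holds in $E_e\overline{\otimes}F_f$. Now the assignment $(a,b)\mapsto \hat a\cdot\hat b\in C(X\times Y)$, where $(\hat a\cdot\hat b)(s,t)=\hat a(s)\hat b(t)$, is a Riesz bimorphism (each one-variable map is a Riesz homomorphism followed by multiplication by a nonnegative function), so by the universal property of Theorem \ref{fremlin}(i) it induces a Riesz homomorphism $S\colon E_e\overline{\otimes}F_f\to C(X\times Y)$ with $S(a\otimes b)=\hat a\cdot\hat b$. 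Being positive, $S$ preserves the order, whence $\hat x(s)\hat y(t)\le \hat{x'}(s)\hat{y'}(t)$ for all $(s,t)\in X\times Y$.

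Finally I would carry out the elementary pointwise estimate. Choose $s_0$ with $\hat x(s_0)>0$ and $t_0$ with $\hat y(t_0)>0$, which exist because $\hat x,\hat y$ are nonnegative and nonzero. Evaluating at $(s_0,t_0)$ gives $0<\hat x(s_0)\hat y(t_0)\le \hat{x'}(s_0)\hat{y'}(t_0)$, forcing $\hat{x'}(s_0)>0$ and $\hat{y'}(t_0)>0$. Fixing $t=t_0$ then yields $\hat x\le \alpha\,\hat{x'}$ in $C(X)$ with $\alpha=\hat{y'}(t_0)/\hat y(t_0)>0$, and symmetrically, fixing $s=s_0$ yields $\hat y\le \beta\,\hat{y'}$ in $C(Y)$ with $\beta=\hat{x'}(s_0)/\hat x(s_0)>0$. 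Pulling these back through the order embeddings $E_e\hookrightarrow (E_e)^\delta$ and $F_f\hookrightarrow (F_f)^\delta$, and then into $E$ and $F$, gives $x\le\alpha x'$ and $y\le\beta y'$ with $\alpha,\beta>0$, as desired.

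The step I expect to be the main obstacle is the transfer of the inequality between the two tensor products, namely justifying that $E_e\overline{\otimes}F_f$ embeds as a Riesz subspace of $E\overline{\otimes}F$ so that the order relation is faithfully reflected; this rests on the behaviour of the Fremlin tensor product under Riesz embeddings and is the only non-routine ingredient. Once that reduction is in place, the representation via Theorem \ref{yos} and the final pointwise argument are entirely elementary.
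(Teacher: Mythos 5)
Your proof is correct and follows essentially the same route as the paper's: reduce to the principal ideals generated by $x\vee x'$ and $y\vee y'$, represent them in spaces of continuous functions on compact spaces, convert the hypothesis into the pointwise inequality $\hat{x}(s)\hat{y}(t)\le \hat{x}'(s)\hat{y}'(t)$, and read off $\alpha$ and $\beta$ by evaluating at points where $\hat{y}$ (resp.\ $\hat{x}$) does not vanish. The only differences are technical bookkeeping: the paper obtains the function representation directly from Schaefer's construction of $E_{x\vee x'}\rtp F_{y\vee y'}$ inside $C(K_{x\vee x'}\times K_{y\vee y'})$, whereas you assemble it from Theorem \ref{yos} applied to Dedekind completions together with the universal property of Theorem \ref{fremlin}, and you state explicitly the Riesz-subspace embedding $E_e\rtp F_f\to E\rtp F$ (the fact the paper itself cites from Grobler in Proposition \ref{density}) which the paper's own proof uses only tacitly.
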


\begin{proof}
Let $E_{x \vee x'}$ and $F_{y \vee y'}$ be the principal ideals generated by $x \vee x'$ and $y \vee y'$, respectively. As in the construction of the Fremlin tensor product made by Schaefer in  \cite{schaefer1980aspects}, the tensor product $E_{x \vee x'} \otimes F_{y \vee y'}$ becomes a vector subspace of $C(K_{x \vee x'}) \otimes C(K_{y \vee y'})$ which in turn can be viewed as a subspace of $C(K_{x \vee x'} \times K_{y \vee y'})$, the space of all functions of the form $h(s,t) = \sum_i f_i(s)g_i(t)$ where $f_i\in C(K_{x \vee x'})$, $g_i\in  C(K_{y \vee y'})$ and $K_{x \vee x'}$ and $ K_{y \vee y'}$ are compact topological spaces. $x$ and $x'$ can be viewed as $\hat{x}$ and $\hat{x}'$ in  $C(K_{x \vee x'})$ and $y$ and $y'$ can be viewed as $\hat{y}$ and $\hat{y}'$ in  $C(K_{y \vee y'})$. Since $x \otimes y \leq x' \otimes y'$, it follows that $$ \hat{x}(s)\hat{y}(t)\leq \hat{x}'(s)\hat{y}'(t)$$ for every $s$ and $t$ in $K_{x \vee x'}$ and $ K_{y \vee y'}$ respectively. Since $y$ is strictly positive, there exists some $t_0$ in  $K_{y \vee y'}$ such that $\hat{y}(t_0) > 0$. It follows that $$ \hat{x}(s)\leq \frac{\hat{y}'(t_0)}{\hat{y}(t_0)}\hat{x}'(s)$$ for every $s$ in $K_{x \vee x'}$. Fix $$ \alpha = \frac{\hat{y}'(t_0)}{\hat{y}(t_0)}.$$  Then $x \leq \alpha x'$. We proceed in the same way to show that  $y \leq \beta y'$ and this completes the proof.
\end{proof}

Now, we gathered all the ingredients we need in order to prove the main result of this section.

\begin{theorem}\label{oc}
    Let $E$ and $F$ be two Archimedean Riesz spaces and let $\sigma$ be a bilinear map defined by:
    \[\begin{array}{lcll}
        \sigma: & E \times F & \longrightarrow & E\rtp F  \\
         &  (x,y) & \longmapsto& x \otimes y. 
    \end{array}\]
    Then $\sigma$ is order continuous.
\end{theorem}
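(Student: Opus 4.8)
The plan is to exploit the positivity of $\sigma$ to reduce order continuity to a statement about monotone nets, and then to combine Lemma \ref{Yoshida} with property (iv) of Theorem \ref{fremlin}. First I would observe that $\sigma$ is a positive bilinear map (in fact a Riesz bimorphism), so it suffices to prove that it is order continuous in each variable separately; since $E$ and $F$ enter symmetrically, I only need to fix $y\in F^+$ and show that the positive operator $T_y\colon x\mapsto x\otimes y$ is order continuous. For a positive operator this amounts to the single implication: $x_\alpha\downarrow 0$ in $E$ forces $\inf_\alpha (x_\alpha\otimes y)=0$.

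So fix $y\in F^+$ (the case $y=0$ being trivial, assume $y>0$) and a net $x_\alpha\downarrow 0$. Positivity makes $\{x_\alpha\otimes y\}$ a decreasing net bounded below by $0$, and I would argue by contradiction. If the infimum were not $0$, some lower bound $w$ of the net would satisfy $w>0$; replacing $w$ by $w\vee 0$ (still a lower bound, as $0\le x_\alpha\otimes y$) we may assume $w\ge 0$. By property (iv) of Theorem \ref{fremlin} applied to $w>0$ in $E\rtp F$, there exist $p\in E^+$ and $q\in F^+$ with $0<p\otimes q\le w\le x_\alpha\otimes y$ for every $\alpha$. In particular each $x_\alpha>0$, so $p,x_\alpha,q,y$ are all strictly positive and Lemma \ref{Yoshida} applies to the inequality $p\otimes q\le x_\alpha\otimes y$.

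The decisive point, and the step I expect to be the main obstacle, is to control the constant produced by Lemma \ref{Yoshida} \emph{uniformly in} $\alpha$. For each $\alpha$ the lemma yields $s_\alpha>0$ with $p\le s_\alpha x_\alpha$. Tracking the lemma's proof, the constant on the $E$-side has the form $s_\alpha=\hat{y}(t_0)/\hat{q}(t_0)$, where $t_0$ is a point of the compact space $K_{q\vee y}$ with $\hat{q}(t_0)>0$. This quantity is computed entirely inside the principal ideal $F_{q\vee y}$ and its Yoshida representation, both of which depend only on the fixed data $q$ and $y$ and \emph{not} on $\alpha$. Hence a single constant $s:=\hat{y}(t_0)/\hat{q}(t_0)>0$ works simultaneously for all $\alpha$, giving $\frac{1}{s}\,p\le x_\alpha$ for every $\alpha$.

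Finally I would close the argument: $\frac{1}{s}\,p$ is then a lower bound of the net $\{x_\alpha\}$, so $x_\alpha\downarrow 0$ forces $\frac{1}{s}\,p\le 0$, i.e. $p\le 0$, contradicting $p>0$. Therefore no lower bound of $\{x_\alpha\otimes y\}$ can be strictly positive, whence $x_\alpha\otimes y\downarrow 0$ and $T_y$ is order continuous. By the symmetric argument in the first variable, $\sigma$ is order continuous in each variable, and hence order continuous, as claimed. The crux of the whole proof is the observation that the domination constant of Lemma \ref{Yoshida} is governed solely by the fixed factor $y$ (through $q$ and $y$), which is precisely what upgrades the pointwise bounds $p\le s_\alpha x_\alpha$ into the uniform bound needed for the contradiction.
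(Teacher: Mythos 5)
Your proof is correct and follows essentially the same route as the paper's: argue by contradiction on a strictly positive lower bound, invoke property (iv) of Theorem \ref{fremlin} to produce $p\otimes q$ below it, and apply Lemma \ref{Yoshida} to get a bound $p\le s\,x_\alpha$ that forces $p\le 0$; your separate-to-joint reduction via positivity is exactly the paper's second step, where a tail of the second net is bounded by a fixed $y'$. If anything, you make explicit the one point the paper leaves implicit --- that the constant coming out of Lemma \ref{Yoshida} is computed entirely from the fixed data $q$ and $y$ in $C(K_{q\vee y})$ and is therefore uniform in $\alpha$ --- which is precisely what legitimizes the paper's phrase ``there exists some $\beta>0$ such that $x'\le\beta x_\alpha$ for every $\alpha$.''
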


\begin{proof}
The result will be proved in several steps.
\begin{itemize}
    \item     Let $(x_\alpha)$ be a net in $E$ such that $x_\alpha \downarrow 0$ and let $y$ be a strictly positive element in $F$. Then $x_\alpha \otimes y \downarrow 0$. \\
    In order to prove this assertion, note that $x_\alpha \otimes y$ is a decreasing net and minimized by $0$. So, there exists some positive element $u$ in $\overline{E\rtp F}^\delta $ such that $x_\alpha \otimes y \downarrow u$. Assume that $u> 0$. The order denseness of $E\rtp F $ in $\overline{E\rtp F}^\delta $ together with (iv) in Theorem \ref{fremlin}, yield the existence of $0< x'$ in $E$ and $0< y'$ in $F$ such that $$0\leq x'\otimes y' \leq u \leq x_\alpha \otimes y \text{ for every } \alpha.$$ From Lemma \ref{Yoshida}, it follows that there exists some $\beta > 0$ such that $$x' \leq \beta x_\alpha  \text{ for every } \alpha,$$ and then $$\frac{1}{\beta} x' \leq x_\alpha  \text{ for every } \alpha.$$ The latter fact yields $x'\leq 0$ which is a contradiction and then $u=0$. It follows that $x_\alpha \otimes y \downarrow 0$ in $\overline{E\rtp F}^\delta $ and in $E\rtp F $.
    \item  Let $(x_\alpha)$ be a net in $E$ such that $x_\alpha \downarrow 0$ and $(y_\beta)$  be a net in $F$ such that $y_\beta \downarrow 0$. Then the double net $x_\alpha \otimes y_\beta \downarrow 0$. \\
    As $(y_\beta)$ is decreasing to zero, $(y_\beta)$ has a bounded tail. Then there exists some $y'$ in $F_+$ such that $y_\beta \leq y'$. It follows that $$0 \leq x_\alpha \otimes y_\beta  \leq x_\alpha \otimes y'.$$ This completes the proof.
\end{itemize}

\end{proof}

\section{The Riesz tensor product of Riesz subspaces}

In this section, principal ideals and principal bands of Riesz spaces will be our focus of interest. 

\begin{proposition}
\label{density}
 Let $E$ and $F$ be Riesz spaces. Let $e$ be a positive element in $E$, and $f$ be a positive element in $F$. Let $E_e$ and $E_f$ be the principal ideals generated by $e$ and $f$, respectively. Then $E_e \rtp F_f$ is an order dense Riesz subspace of $(E \rtp F)_{e \otimes f}$, where $(E \rtp F)_{e \otimes f}$ is the principal ideal in  $(E \rtp F)$ generated by ${e \otimes f}$.
\end{proposition}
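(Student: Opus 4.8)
The plan is to realise $E_e \rtp F_f$ concretely inside $E \rtp F$ and then verify the two assertions---being a Riesz subspace of the principal ideal, and being order dense in it---separately. If $e=0$ or $f=0$ the ideal $(E\rtp F)_{e\otimes f}$ is trivial and there is nothing to prove, so I would assume $e>0$ and $f>0$ throughout. The inclusions $E_e\hookrightarrow E$ and $F_f\hookrightarrow F$ are Riesz embeddings, and the restriction of $\sigma$ to $E_e\times F_f$ is a Riesz bimorphism into $E\rtp F$; by the universal property of Theorem \ref{fremlin}(i) this induces a canonical Riesz homomorphism $E_e\rtp F_f\to E\rtp F$ whose image is the Riesz subspace generated by the elementary tensors $x\otimes y$ with $x\in E_e$, $y\in F_f$. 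That this map is an embedding---so that $E_e\rtp F_f$ may genuinely be regarded as a Riesz subspace of $E\rtp F$---is the standard functoriality of the Fremlin tensor product for Riesz subspaces, for which I would cite \cite{fremlin1972tensor}. I identify $E_e\rtp F_f$ with this image from now on.

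For the containment $E_e\rtp F_f\subseteq (E\rtp F)_{e\otimes f}$, I would take $w\in E_e\rtp F_f$ and apply property (B) inside $E_e\rtp F_f$ to obtain $x\in (E_e)_+$ and $y\in (F_f)_+$ with $|w|\le x\otimes y$. Since $x\in E_e$ and $y\in F_f$ there are $\lambda,\mu>0$ with $x\le\lambda e$ and $y\le\mu f$, and because $\sigma$ is a Riesz bimorphism it is monotone in each variable on the positive cone; hence $x\otimes y\le (\lambda e)\otimes(\mu f)=\lambda\mu\,(e\otimes f)$. Thus $|w|\le\lambda\mu\,(e\otimes f)$, which is exactly the statement that $w$ lies in the principal ideal generated by $e\otimes f$.

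The order density is the heart of the argument. Given $0<w\in (E\rtp F)_{e\otimes f}$, there is a $\lambda>0$ with $w\le\lambda\,(e\otimes f)=(\lambda e)\otimes f$. Since $w>0$ in $E\rtp F$, property (iv) of Theorem \ref{fremlin} produces $x\in E_+$ and $y\in F_+$ with $0<x\otimes y\le w$; in particular $x>0$ and $y>0$, since a vanishing factor would force $x\otimes y=0$. Chaining the inequalities gives $x\otimes y\le(\lambda e)\otimes f$ with all four factors strictly positive, so Lemma \ref{Yoshida} applies and yields $\alpha,\beta>0$ with $x\le\alpha(\lambda e)$ and $y\le\beta f$. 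This forces $x\in E_e$ and $y\in F_f$, whence $x\otimes y\in E_e\rtp F_f$ while still $0<x\otimes y\le w$. As $w$ was an arbitrary strictly positive element of $(E\rtp F)_{e\otimes f}$, this establishes order density.

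The main obstacle is precisely this last step: property (iv) only guarantees a dominating elementary tensor $x\otimes y$ inside the large space $E\rtp F$, and the real difficulty is to certify that its factors $x,y$ can be taken in the principal ideals $E_e,F_f$. Lemma \ref{Yoshida} is tailored for exactly this, converting the tensor-level domination $x\otimes y\le(\lambda e)\otimes f$ back into scalar domination of the individual factors; without it one would have no reason to expect the density witness to remain in $E_e\rtp F_f$. The only other point requiring care is the embedding in the first paragraph, which is conceptual rather than computational and which I would dispatch by citation.
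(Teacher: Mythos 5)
Your proof is correct and follows essentially the same route as the paper's: the containment $E_e \rtp F_f \subseteq (E\rtp F)_{e\otimes f}$ via Property (B), then order density by taking $0<u\le \alpha\, (e\otimes f)$, extracting an elementary tensor $0<x\otimes y\le u$ from Theorem \ref{fremlin}(iv) (the paper invokes this as Property (OD)), and applying Lemma \ref{Yoshida} to conclude $x\in E_e$, $y\in F_f$. Your version is in fact slightly more careful than the paper's, since you make explicit the degenerate case $e=0$ or $f=0$ and the strict positivity of the factors needed to invoke Lemma \ref{Yoshida}, points the paper passes over silently.
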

\begin{proof}
By \cite[Proposition 3.1]{grob2022} and the boundedness Property (B), it follows that $E_e \rtp F_f$ is a Riesz subspace of $(E \rtp F)_{e \otimes f}$.
Let $0 < u \in (E \rtp F)_{e \otimes f}$. Then there exists a positive real number $\alpha$ such that $0 < u \leq \alpha e \otimes f$. By the Property (OD), there exist positive elements $x\in E$ and $y\in F$
 such that $ 0 < x \otimes y \leq u$. It follows that $0 < x \otimes y \leq \alpha e \rtp f$. By the Lemma \ref{Yoshida}, we have $0 \leq x \in E_e$ and $0 \leq y \in  F_f$ and hence $ x \otimes y \in E_e \rtp F_f$. \end{proof}

 Now we have all the material in order to prove the next theorem. 
 
 \begin{theorem}\label{princbands}
    Let $E$ and $F$ be Riesz spaces. Let $e$ be a positive element in $E$, and $f$ be a positive element in $F$. Let $B_e$ and $B_f$ be the principal bands generated by $e$ and $f$, respectively. Then $B_e \rtp B_f$ is an order dense Riesz subspace in the principal band in $E \rtp F$ generated by $ e \otimes f$. 
 \end{theorem}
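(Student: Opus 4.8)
The plan is to build on Proposition \ref{density} and on the order continuity of the tensor map from Theorem \ref{oc}, and to argue by a chain of order-dense inclusions. Write $B_{e\otimes f}$ for the principal band generated by $e\otimes f$ in $E\rtp F$. First I would record, exactly as at the start of the proof of Proposition \ref{density}, that \cite[Proposition 3.1]{grob2022} together with the boundedness property (B) lets us regard $B_e\rtp B_f$ as a Riesz subspace of $E\rtp F$; the inclusions $E_e\subseteq B_e$ and $F_f\subseteq B_f$ then induce an inclusion $E_e\rtp F_f\subseteq B_e\rtp B_f$ of Riesz subspaces of $E\rtp F$, since $E_e\rtp F_f$ is the Riesz subspace generated by $E_e\otimes F_f\subseteq B_e\otimes B_f$.

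The key step is the containment $B_e\rtp B_f\subseteq B_{e\otimes f}$. Because $B_{e\otimes f}$ is a Riesz subspace and $B_e\rtp B_f$ is generated by the simple tensors $a\otimes b$ with $a\in B_e$, $b\in B_f$, and since every such tensor is a linear combination of the positive ones $a^{\pm}\otimes b^{\pm}$, it suffices to prove $a\otimes b\in B_{e\otimes f}$ for $a\in B_e^{+}$ and $b\in B_f^{+}$. For these, set $u_n=a\wedge ne$ and $v_m=b\wedge mf$, so that $u_n\uparrow a$ and $v_m\uparrow b$ by the description of a principal band, and $u_n\otimes v_m\le nm\,(e\otimes f)$ gives $u_n\otimes v_m\in (E\rtp F)_{e\otimes f}\subseteq B_{e\otimes f}$. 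Order continuity of $\sigma$ (Theorem \ref{oc}) yields, with $m$ fixed, $u_n\otimes v_m\uparrow a\otimes v_m$, and then $a\otimes v_m\uparrow a\otimes b$; since a band is order closed, two applications of order closedness push first $a\otimes v_m$ and then $a\otimes b$ into $B_{e\otimes f}$.

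Finally I would assemble the order density by transitivity. Proposition \ref{density} gives that $E_e\rtp F_f$ is order dense in the principal ideal $(E\rtp F)_{e\otimes f}$, while $(E\rtp F)_{e\otimes f}$, being an ideal, is order dense in the band it generates, namely $B_{e\otimes f}$, by \cite[Theorem 1.24]{aliprantis2003locally}. Order density is transitive, so $E_e\rtp F_f$ is order dense in $B_{e\otimes f}$; and since $E_e\rtp F_f\subseteq B_e\rtp B_f\subseteq B_{e\otimes f}$, any subspace squeezed between an order-dense subspace and the whole space is itself order dense. Hence $B_e\rtp B_f$ is an order dense Riesz subspace of $B_{e\otimes f}$, as required.

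The main obstacle is precisely the containment $B_e\rtp B_f\subseteq B_{e\otimes f}$: the ideal-level density statements are comparatively soft, but passing from ideals to the bands they generate genuinely requires the order continuity of the tensor embedding to transport the increasing approximations $a\wedge ne$ and $b\wedge mf$ through $\rtp$, together with order closedness of bands. One should take care to check that the suprema supplied by Theorem \ref{oc} are computed inside $E\rtp F$ and not merely in its Dedekind completion, so that order closedness of $B_{e\otimes f}$ as a band of $E\rtp F$ is what gets applied; the cases where some $v_m$ or $u_n$ vanishes are handled trivially.
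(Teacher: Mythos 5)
Your proof is correct, and its closing assembly---Proposition \ref{density}, order density of the principal ideal $(E\rtp F)_{e\otimes f}$ in $\{e\otimes f\}^{dd}$ via \cite[Theorem 1.24]{aliprantis2003locally}, and the sandwich $E_e\rtp F_f\subseteq B_e\rtp B_f\subseteq \{e\otimes f\}^{dd}$---is exactly the paper's. Where you diverge is in the key containment $B_e\rtp B_f\subseteq \{e\otimes f\}^{dd}$. The paper proves it by a disjointness computation: given a positive $u$ with $u\wedge (e\otimes f)=0$ and a positive $v\in B_e\rtp B_f$, it dominates $v\le x\otimes y$ with $x\in B_e^+$, $y\in B_f^+$ using property (B), writes $x\otimes y=\bigvee_{n}\bigvee_{m}(x\wedge ne)\otimes(y\wedge mf)$ by order continuity, distributes $u\wedge\cdot$ over the supremum, and bounds each term by a multiple of $u\wedge(e\otimes f)=0$, so that $v$ lands in the bipolar $\{e\otimes f\}^{dd}$. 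You instead prove direct membership in the band: you reduce to positive simple tensors $a\otimes b$, approximate them by $(a\wedge ne)\otimes(b\wedge mf)\in (E\rtp F)_{e\otimes f}$, and apply order closedness of the band twice. Both arguments hinge on Theorem \ref{oc} and on the representation $x\wedge ne\uparrow x$ of principal bands, so the skeleton is shared; the difference is the mechanism. Your route is arguably more transparent (band membership via order closure, no disjointness/bipolar step) and dispenses with property (B) at this point, but it leans on the identification of $B_e\rtp B_f$ with the Riesz subspace of $E\rtp F$ generated by $\{a\otimes b:\ a\in B_e,\ b\in B_f\}$; this identification is legitimate---it follows from Fremlin's universal property, and the paper itself invokes precisely this representation later, in the proof of Theorem \ref{pricband}---but it is the load-bearing fact in your version, whereas the paper's argument for the present theorem sidesteps it by substituting property (B). Your caution that the suprema supplied by Theorem \ref{oc} are attained in $E\rtp F$ itself, not merely in its Dedekind completion, is exactly the right point to flag, and the paper's proof of Theorem \ref{oc} does establish this.
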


 \begin{proof}
 First, we need to show that $B_e \rtp B_f \subset  \{e \otimes f \}^{dd}$. To this end, pick a positive element $u$ in $E \rtp F$ such that $u \wedge e\otimes f = 0$, and a positive element $v$ in $B_e \rtp B_f$. By Property (B), there exist positive elements $x\in B_e$ and $y\in B_f$ such that $$0 \leq v \leq x\otimes y.$$ Since $B_e$ and $B_f$  are principal bands, it follows that \begin{align*}
     x &= \bigvee_n x\wedge n e \\
     y &= \bigvee_m y \wedge m f
 \end{align*} 
From the order continuity of the Riesz tensor product (see  \cite[Corollary 3.4]{grob2022}), it follows that \begin{align*}
    0 &\leq u \wedge v \leq u \wedge x\otimes y \\
    0 &\leq u \wedge v \leq u\wedge \bigvee_n \bigvee_m (x\wedge ne)\otimes (y\wedge mf)\\
    0 &\leq u \wedge v \leq \bigvee_n \bigvee_m (u \wedge (x\wedge ne)\otimes (y\wedge mf))\\
    0 &\leq u \wedge v \leq \bigvee_n \bigvee_m (u \wedge  ne\otimes  mf)\\
    0 &\leq u \wedge v\leq \bigvee_n \bigvee_m (n + m) (u \wedge  e\otimes  f) = 0.\\
\end{align*}
Therefore $B_e \rtp B_f \subset  \{e \otimes f \}^{dd}$.
Now, we have that $$E_e \rtp F_f \subset B_e \rtp B_f \subset  \{e \otimes f \}^{dd}$$ and $$E_e \rtp F_f \subset (E \rtp F)_{e \otimes f}\subset  \{e \otimes f \}^{dd}.$$
 
  By \cite[Theorem 1.24]{aliprantis2003locally}, $(E \rtp F)_{e \otimes f}$ is order dense in $ \{e \otimes f \}^{dd}$.  It follows from Proposition \ref{density},  that $B_e \rtp B_f$ is order dense in $\{e \otimes f\}^{dd}$. 
\end{proof}

\begin{corollary}
    Let $E$ and $F$ be Riesz spaces, $e$ and $f$ be positive elements in $E$ and $F$, respectively. Let $B_e$ and $B_f$ be the principal bands generated by $e$ and $f$, respectively. Then $B_e \rtp B_f$ is order dense in the principal band generated by $e\otimes f$ in $E \rtpd F$.
\end{corollary}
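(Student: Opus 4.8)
The plan is to reduce everything to Theorem \ref{princbands} via transitivity of order density. Write $G := E\rtp F$ for the Fremlin tensor product and $G^\delta := E\rtpd F = (E\rtp F)^\delta$ for its Dedekind completion. Theorem \ref{princbands} already gives that $B_e\rtp B_f$ is order dense in the principal band $\{e\otimes f\}^{dd}$ computed \emph{in} $G$, so it suffices to show that this band is in turn order dense in the principal band generated by $e\otimes f$ \emph{in} $G^\delta$. Since order density is transitive — given $0<w$ in the band in $G^\delta$, one first drops into the band in $G$ and then, by Theorem \ref{princbands}, into $B_e\rtp B_f$ — the corollary will follow at once.

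First I would record the structural input. By the definition of the Dedekind completion, $G$ is an order dense and majorizing Riesz subspace of $G^\delta$, hence a regular Riesz subspace by Theorem \ref{regular}. In particular finite infima, and therefore the disjointness relation, agree in $G$ and in $G^\delta$ on elements of $G$: for $a,b\in G$ one has $|a|\wedge|b|=0$ in $G$ if and only if $|a|\wedge|b|=0$ in $G^\delta$.

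The key step is to compare the band $B:=\{e\otimes f\}^{dd}$ formed in $G$ with the band $B^\delta:=\{e\otimes f\}^{dd}$ formed in $G^\delta$, and I would argue in two moves. To see $B\subseteq B^\delta$: given $g\in B$ and any $w\in G^\delta$ with $w\perp e\otimes f$, suppose $|g|\wedge|w|>0$; order density of $G$ in $G^\delta$ yields $u\in G$ with $0<u\le|g|\wedge|w|$, so $u\in B$ by solidity, while $0\le u\le|w|$ forces $u\wedge(e\otimes f)=0$ in $G^\delta$ and hence, by regularity, $u\in\{e\otimes f\}^d$ in $G$; thus $u\wedge u=0$, contradicting $u>0$, and therefore $g\perp w$, i.e. $g\in B^\delta$. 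For the order density of $B$ in $B^\delta$: given $0<w\in B^\delta$, order density of $G$ in $G^\delta$ supplies $g\in G$ with $0<g\le w$, so $g\in B^\delta$ by solidity; and $g\in B$ because every $h\in G$ with $h\perp e\otimes f$ lies in $\{e\otimes f\}^d$ in $G^\delta$, hence is disjoint from $g$ in $G^\delta$ and, by regularity, in $G$. This exhibits $g\in B$ with $0<g\le w$, completing the comparison.

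The only genuine obstacle is that the elements $w\in G^\delta$ witnessing disjointness from $e\otimes f$ need not lie in $G$, so one cannot work purely inside the Fremlin tensor product nor invoke Theorem \ref{princbands} directly on them. This is resolved precisely by the combination used above: order density of $G$ in $G^\delta$ lets one approximate any positive element of $G^\delta$ from below by elements of $G$, and regularity guarantees that disjointness detected in $G^\delta$ is already visible in $G$. Once $B$ is shown order dense in $B^\delta$, transitivity with Theorem \ref{princbands} yields that $B_e\rtp B_f$ is order dense in $B^\delta$, which is the principal band generated by $e\otimes f$ in $E\rtpd F$.
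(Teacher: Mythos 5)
Your proof is correct and takes essentially the same route as the paper: approximate a positive element of the band in $E\rtpd F$ from below by an element of $E\rtp F$ (order density of the Fremlin product in its Dedekind completion), transfer disjointness from $e\otimes f$ between the two spaces via regularity of the Riesz subspace $E\rtp F$, conclude that $(e\otimes f)^{dd}$ computed in $E\rtp F$ is order dense in the band generated by $e\otimes f$ in $E\rtpd F$, and finish by transitivity with Theorem \ref{princbands}. The only difference is that you explicitly verify the inclusion of the band formed in $E\rtp F$ into the band formed in $E\rtpd F$, a containment the paper's proof uses implicitly.
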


    \begin{proof}
        We will denote by $(e\otimes f)^{dd}$ the principal band generated by $e\otimes f$ in $E\rtp F$ and by $(e{\otimes}_\delta f)^{dd}$ the principal band generated by $e\otimes f$ in $E\rtpd F$. Now take some strictly positive element $u\in (e{\otimes}_\delta f)^{dd}$. As $E\rtp F$ is order dense in $E\rtpd F$, there exists some $w$ such that $0<w\le u$. Pick $v\in E\rtp F$ such that $v\wedge e\otimes f=0$ in $E\rtp F$. Since $E\rtp F$ is regular in $E\rtpd F$ (see \ref{regular}), $v\wedge e\otimes f=0$ in $E\rtpd F$ as well. So we have that $v\wedge u =0$ and then $0\le w\wedge v \le u\wedge v =0$. This implies that $w$ belongs to $(e{\otimes} f)^{dd}$. Therefore $(e{\otimes} f)^{dd}$ is order dense in $(e{\otimes}_\delta f)^{dd}$. Together with $B_e \rtp B_f$ being order dense in $(e{\otimes} f)^{dd}$, this shows that $B_e \rtp B_f$ is order dense in $(e{\otimes}_\delta f)^{dd}$.
    \end{proof}

 The next proposition is a generalization of  \cite[Porposition 7.1]{grob2022}.
 \begin{proposition}\label{weak}
 Let $e$ and $f$ be weak order units of the Riesz spaces $E$ and $F$, respectively. Then $e\otimes f$ is a weak unit in $E \rtp F$.
 \end{proposition}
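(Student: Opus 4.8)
The plan is to deduce this as a quick consequence of Theorem \ref{princbands}, since the hypotheses make the principal bands coincide with the whole spaces. Recall that $e$ being a weak order unit of $E$ means precisely $B_e = E$, and likewise $B_f = F$. Consequently the Riesz tensor product $B_e \rtp B_f$ is nothing but $E \rtp F$ itself, sitting inside $E \rtp F$ via the identity embedding.

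First I would invoke Theorem \ref{princbands}, which (in its first conclusion) gives $B_e \rtp B_f \subseteq \{e \otimes f\}^{dd}$, where $\{e \otimes f\}^{dd}$ denotes the principal band generated by $e \otimes f$ in $E \rtp F$. Substituting $B_e = E$ and $B_f = F$, this reads $E \rtp F \subseteq \{e \otimes f\}^{dd}$. Since the band $\{e \otimes f\}^{dd}$ is by construction a subspace of $E \rtp F$, the reverse inclusion is automatic, and I obtain $\{e \otimes f\}^{dd} = E \rtp F$. By the definition of a weak order unit recalled in the Preliminaries, this is exactly the assertion that $e \otimes f$ is a weak order unit of $E \rtp F$.

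If instead one prefers a self-contained argument not quoting Theorem \ref{princbands}, I would show directly that $\{e \otimes f\}^d = \{0\}$. Taking $0 \le u$ with $u \wedge (e \otimes f) = 0$ and assuming $u > 0$, Property (OD) produces $x \in E_+$ and $y \in F_+$ with $0 < x \otimes y \le u$, whence $(x \otimes y) \wedge (e \otimes f) = 0$. Because $e$ and $f$ are weak order units, $x \wedge ne \uparrow x$ and $y \wedge mf \uparrow y$; the order continuity of $\sigma$ established in Theorem \ref{oc} (equivalently \cite[Corollary 3.4]{grob2022}) then yields $(x \wedge ne) \otimes (y \wedge mf) \uparrow_{n,m} x \otimes y$. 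Each term satisfies $(x \wedge ne) \otimes (y \wedge mf) \le nm\,(e \otimes f)$, so it lies in $\{e \otimes f\}^{dd}$; as a band is order closed, the supremum $x \otimes y$ lies in $\{e \otimes f\}^{dd}$ as well. But $x \otimes y \in \{e \otimes f\}^d$ too, and $\{e \otimes f\}^{dd} \cap \{e \otimes f\}^d = \{0\}$, forcing $x \otimes y = 0$ --- a contradiction. Hence $u = 0$ and $\{e \otimes f\}^d = \{0\}$, i.e.\ $e \otimes f$ is a weak order unit.

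I expect essentially no serious obstacle here: the substantive content is already packaged in Theorem \ref{princbands} and in the order continuity of $\sigma$. The only point demanding a little care --- and the place where the direct route could go wrong --- is the passage to the limit: one must be sure the double net $(x \wedge ne) \otimes (y \wedge mf)$ increases to $x \otimes y$, which is exactly where order continuity of the tensor map is invoked, and that membership in the band is preserved under this order limit precisely because bands are order closed.
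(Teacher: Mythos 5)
Your proposal is correct, and your primary argument takes a genuinely different route from the paper's. The paper proves the proposition directly, in the style of Grobler's \cite[Proposition 7.1]{grob2022}: it first treats the case $u = x \otimes y$, showing that disjointness from $e \otimes f$ forces $(x \wedge ne)\otimes(y \wedge f) = 0$ for all $n$, and then, by order continuity of the tensor map and $x \wedge ne \uparrow x$ (the weak-unit hypothesis), that $x \otimes y = 0$; the general case follows from Property (OD) by the contradiction you also describe. Your first route instead observes that Proposition \ref{weak} is a formal specialization of Theorem \ref{princbands}: taking $B_e = E$ and $B_f = F$, that theorem gives $E \rtp F \subseteq \{e\otimes f\}^{dd} \subseteq E \rtp F$, hence $\{e \otimes f\}^{dd} = E \rtp F$, which is precisely the definition of a weak order unit recalled in the Preliminaries. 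This is shorter and makes the logical dependence transparent (and it is not circular, since Theorem \ref{princbands} precedes the proposition and nowhere uses it), but it inherits all the machinery behind that theorem (Property (B), Proposition \ref{density}, Lemma \ref{Yoshida}, and \cite[Theorem 1.24]{aliprantis2003locally}), whereas the paper's direct proof is self-contained, needing only (OD) and order continuity of $\sigma$. Your fallback second route is essentially the paper's argument repackaged in band-theoretic language: where the paper shows the relevant tensors vanish term by term and passes to the limit in each variable separately, you keep the double net $(x \wedge ne)\otimes(y\wedge mf)\uparrow x \otimes y$, place each term in $\{e\otimes f\}^{dd}$ via domination by $nm\,(e\otimes f)$, invoke order closedness of bands, and conclude from $\{e\otimes f\}^{d} \cap \{e \otimes f\}^{dd} = \{0\}$; this is correct and reaches the same contradiction by a slightly more structural bookkeeping.
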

 
 \begin{proof}
 Let $0\leq u$ in $E \bar{\otimes}F$ such that $u\wedge e \otimes f=0$, we have to show that $u=0$.
\begin{itemize}
\item First assume that $u=x\otimes y$ in $E_{+} \otimes F_{+}$ then from $x\otimes y \wedge  e \otimes f = 0$ it follows that \[(x\wedge n e) \otimes (y \wedge f)= 0 \ \ \forall n \in \mathbb{N}.\] The fact that $\displaystyle  \ \{ x \wedge n e \} \uparrow x $, together with the order continuity of the Riesz tensor product (see \cite{grob2022}), yield to  \[x \otimes (y \wedge f)= 0.\]We proceed in the same way to obtain that $ u = x \otimes y =0.$
\item Now, if $0 < u \in E \bar{\otimes}F$ and $u \wedge e \otimes f=0$, by Property (OD) there exist $x\in E_+$ and $y\in F_+$ such that \[0<x \otimes y \leq u.\] But then $x\otimes y \wedge  e \otimes f=0$ and this contradicts with $x \otimes y =0$ , therefore $u$ has to be null. 
\end{itemize}
With the last point of the proof we obtain that $e \otimes f$ is a weak order unit in $E \bar{\otimes}F$.
 \end{proof}

\section{The Dedekind complete Riesz tensor product of Riesz subspaces}

In this section, we give the results on Dedekind complete tensor product of ideals and projection bands.

\begin{lemma}\label{idealded}
 Let $E$ and $F$ be Riesz spaces. Let $e$ be a positive element in $E$, and $f$ be a positive element in $F$. Let $E_e$ be the principal ideal generated by $e$ and  $E_f$ be the principal ideal generated by $f$, then the principal ideal $(E \rtp_\delta F)_{e\otimes f}$ is equal to $\overline{(E \rtp F)_{e \otimes f}}^\delta$, where  $\overline{(E \rtp F)_{e \otimes f}}^\delta$ is the Dedekind completion of $(E \rtp F)_{e \otimes f}$.
\end{lemma}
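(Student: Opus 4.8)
The plan is to verify directly, from the definition of Dedekind completion recalled in Section 2, that $(E \rtp_\delta F)_{e \otimes f}$ is a Dedekind complete Riesz space in which $(E \rtp F)_{e \otimes f}$ sits as a majorizing, order dense Riesz subspace. Uniqueness of the Dedekind completion then forces $(E \rtp_\delta F)_{e \otimes f} = \overline{(E \rtp F)_{e \otimes f}}^\delta$. In other words, I treat this as the instance of a general principle: taking the principal ideal generated by $e\otimes f$ and taking the Dedekind completion of the ambient tensor product commute.

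First I would record that $(E \rtp_\delta F)_{e \otimes f}$ is itself Dedekind complete. Since $E \rtp_\delta F$ is Dedekind complete by Theorem \ref{tensded}, any nonempty subset $A$ of the principal ideal that is bounded above by some $u$ in the ideal is bounded above in $E \rtp_\delta F$, so $s=\sup A$ exists there. Fixing $a_0\in A$ we have $a_0\le s\le u$, and since a principal ideal is solid with positive generator, $|a_0|\le \mu\, e\otimes f$ and $|u|\le \lambda\, e\otimes f$ give $-\mu\, e\otimes f\le s\le \lambda\, e\otimes f$; hence $|s|\le \max(\lambda,\mu)\, e\otimes f$ and $s\in (E\rtp_\delta F)_{e\otimes f}$. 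This yields Dedekind completeness of the ideal.

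Next I would check the three subspace properties. The inclusion $(E \rtp F)_{e \otimes f}\subseteq (E \rtp_\delta F)_{e \otimes f}$ is immediate because $E \rtp F\subseteq E \rtp_\delta F$ and the defining inequality $|w|\le \lambda\, e\otimes f$ is unchanged. Since $E \rtp F$ is a Riesz subspace of $E \rtp_\delta F$ by property (D3), the finite lattice operations on $(E \rtp F)_{e \otimes f}$ coincide with those computed in $(E \rtp_\delta F)_{e \otimes f}$, so it is a Riesz subspace. For the majorizing property, any $w\in (E \rtp_\delta F)_{e \otimes f}$ satisfies $w\le |w|\le \lambda\, e\otimes f$, and $\lambda\, e\otimes f\in (E \rtp F)_{e \otimes f}$ is the required majorant.

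The only step carrying real content is order density, and here the care lies in keeping the witness inside the smaller ideal. I would use that $E \rtp F$ is order dense in its Dedekind completion $E \rtp_\delta F$: given $0<w\in (E \rtp_\delta F)_{e \otimes f}$ there is $v\in E \rtp F$ with $0<v\le w$. Since $0<v\le w\le \lambda\, e\otimes f$ forces $|v|\le \lambda\, e\otimes f$, this $v$ automatically lies in $(E \rtp F)_{e \otimes f}$, giving order density. Assembling Dedekind completeness with the Riesz subspace, majorizing, and order dense properties, and invoking the uniqueness of the Dedekind completion from Section 2, completes the proof. I expect the Dedekind completeness of the principal ideal, together with the solidity bookkeeping that the majorizing and order-dense witnesses remain inside the ideal, to be the only points needing attention; no deeper obstacle is anticipated.
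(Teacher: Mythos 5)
Your proof is correct, and it reaches the conclusion by a somewhat different route than the paper. The paper works concretely inside $E\rtpd F$ and proves two inclusions: first, that $(E\rtpd F)_{e\otimes f}$ is a Dedekind complete ideal containing $(E\rtp F)_{e\otimes f}$, so the completion $\overline{(E \rtp F)_{e \otimes f}}^\delta$ (realized inside $(E\rtpd F)_{e\otimes f}$) is contained in the principal ideal; and second, that every positive $u\in(E\rtpd F)_{e\otimes f}$ satisfies $u=\sup\{v\in (E\rtp F)_{e\otimes f}: 0\le v\le u\}$, which places $u$ in the completion via the concrete characterization of completion elements as such suprema (cited from the literature). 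You instead verify the defining axioms from Section 2 — that $(E\rtpd F)_{e\otimes f}$ is Dedekind complete and that $(E\rtp F)_{e\otimes f}$ sits inside it as a majorizing, order dense Riesz subspace — and then invoke uniqueness of the Dedekind completion. The mathematical heart is the same in both arguments: any $v\in E\rtp F$ with $0< v\le u\le \alpha\, e\otimes f$ automatically lies in $(E\rtp F)_{e\otimes f}$, which is exactly what makes the paper's supremum representation restrict to the smaller ideal and what gives you order density. What your version buys is self-containedness: it uses only the definitions and uniqueness statement recalled in the paper's preliminaries, avoiding the external citation for the supremum description of completion elements (and your explicit check that ideals of Dedekind complete spaces are Dedekind complete replaces the paper's citation of Zaanen). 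What the paper's version buys is a literal set-theoretic equality inside $E\rtpd F$, in a form (sup representations) that is reused directly in the proof of Theorem \ref{idealprincded}; with your approach the equality holds up to the canonical isomorphism fixing $(E\rtp F)_{e\otimes f}$, which is the only sense in which the statement can be read anyway.
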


\begin{proof}
First observe that $(E \rtpd F)_{e \otimes f}$ is a Dedekind complete ideal containing $(E \rtp F)_{e \otimes f}$ (see for example \cite[Theorem 12.4]{zaanen2012introduction}). It follows that $\overline{(E \rtp F)_{e \otimes f}}^\delta$ is included in $(E \rtpd F)_{e \otimes f}$.
Now take a positive element $u$ in $(E \rtpd F)_{e \otimes f}$. Then there exists some positive real $\alpha$ such that $u \leq \alpha (e \otimes f)$. The fact that $u$ is in $(E \rtpd F)$ is equivalent  to (see for example \cite{aliprantis1984order}.) 
 \begin{align*} 
u &=   \sup \{v \in E \rtp F \text{ such that } 0 \leq v \leq u \}\\
 &=  \sup \{v \in E \rtp F \text{ such that } 0 \leq v \leq u \leq \alpha (e \otimes f) \} \\
  & = \sup \{v \in (E \rtp F)_{e \otimes f} \text{ such that } 0 \leq v \leq u  \}.
\end{align*}
We obtain that $u$ is in $\overline{(E \rtp F)_{e \otimes f}}^\delta$ which makes an end to our proof.
\end{proof}

 We give next our result for Dedekind complete tensor product of principal ideals and then later for ideals.
 
\begin{theorem}\label{idealprincded}
 Let $E$ and $F$ be Riesz spaces. Let $e$ be a positive element in $E$, and $f$ be a positive element in $F$. Let $E_e$ be the principal ideal generated by $e$ and  $E_f$ be the principal ideal generated by $f$, then $E_e \rtp_\delta F_f$ is equal to the principal order ideal $(E \rtp_\delta F)_{e \otimes f}$.
\end{theorem}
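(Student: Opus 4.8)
The plan is to reduce the whole statement to the uniqueness of the Dedekind completion, exploiting the two results already established in this section. By the very definition of the Dedekind complete tensor product, $E_e \rtpd F_f = \overline{E_e \rtp F_f}^\delta$, so proving the theorem amounts to identifying the Dedekind completion of $E_e \rtp F_f$ with the principal ideal $(E \rtpd F)_{e \otimes f}$. Lemma \ref{idealded} already tells us that the latter equals $\overline{(E \rtp F)_{e \otimes f}}^\delta$, so it suffices to show that $E_e \rtp F_f$ and $(E \rtp F)_{e \otimes f}$ share the same Dedekind completion.

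To achieve this, I would show that $E_e \rtp F_f$ sits inside $(E \rtp F)_{e \otimes f}$ as a \emph{majorizing} order dense Riesz subspace. Order density is exactly the content of Proposition \ref{density}. For the majorizing property, note that $e \in E_e$ and $f \in F_f$, so $e \otimes f$ itself belongs to $E_e \rtp F_f$; since any $u \in (E \rtp F)_{e \otimes f}$ satisfies $|u| \le \alpha\, (e \otimes f)$ for some $\alpha > 0$, the element $\alpha\, (e \otimes f) \in E_e \rtp F_f$ dominates $u$. Hence $E_e \rtp F_f$ is majorizing in $(E \rtp F)_{e \otimes f}$.

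With both properties in hand, I would compose the embeddings. The space $(E \rtp F)_{e \otimes f}$ is a majorizing order dense Riesz subspace of its own Dedekind completion $\overline{(E \rtp F)_{e \otimes f}}^\delta$, which is Dedekind complete. Since both ``majorizing'' and ``order dense'' pass along a chain of Riesz subspaces, $E_e \rtp F_f$ is a majorizing order dense Riesz subspace of the Dedekind complete space $\overline{(E \rtp F)_{e \otimes f}}^\delta$. By the definition and uniqueness of the Dedekind completion, that space is therefore the Dedekind completion of $E_e \rtp F_f$, i.e. $\overline{E_e \rtp F_f}^\delta = \overline{(E \rtp F)_{e \otimes f}}^\delta$. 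Combining this with $E_e \rtpd F_f = \overline{E_e \rtp F_f}^\delta$ and with Lemma \ref{idealded} yields $E_e \rtpd F_f = (E \rtpd F)_{e \otimes f}$.

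The only point requiring care beyond bookkeeping is the transitivity argument: Proposition \ref{density} supplies order density but not the majorizing property, so I would verify explicitly that majorization propagates through the chain $E_e \rtp F_f \subseteq (E \rtp F)_{e \otimes f} \subseteq \overline{(E \rtp F)_{e \otimes f}}^\delta$. This is a routine chase of the dominating and squeezing witnesses along the two inclusions, and I anticipate no genuine obstacle; the substance of the theorem is really carried by Proposition \ref{density} and Lemma \ref{idealded}.
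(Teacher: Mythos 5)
Your proof is correct, but it takes a genuinely different route from the paper's. The paper never invokes Lemma \ref{idealded} or the uniqueness of the Dedekind completion here: after obtaining order density of $E_e \rtp F_f$ in $(E \rtp F)_{e \otimes f}$ from Proposition \ref{density}, it embeds $E_e \rtpd F_f$ as an order dense Riesz subspace of $(E \rtpd F)_{e \otimes f}$ (citing that $(E \rtp F)_{e \otimes f}$ is full in $(E \rtpd F)_{e \otimes f}$), picks a positive $u \le \alpha (e\otimes f)$ in $(E \rtpd F)_{e \otimes f}$, writes $u$ as the supremum of $\mathfrak{D} = \{v \in E_e \rtpd F_f : 0 \le v \le u\}$, notes that $\mathfrak{D}$ is bounded above in the Dedekind complete space $E_e \rtpd F_f$ --- this is exactly where your majorizing observation appears, implicitly, since the bound $\alpha(e \otimes f)$ lies in $E_e \rtpd F_f$ --- and then uses regularity of order dense subspaces (citing \cite[Lemma 2.5]{gao2017uo} and \cite[Theorem 1.23]{aliprantis2003locally}) to conclude that the supremum $u^\star$ of $\mathfrak{D}$ computed inside $E_e \rtpd F_f$ coincides with $u$. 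In effect the paper reproves by hand, at the level of elements, the abstract fact you invoke wholesale: a majorizing order dense Riesz subspace of a Dedekind complete space has that space as its Dedekind completion. Your route is cleaner in that it makes the majorizing property explicit (it is indeed trivial, since $e \otimes f \in E_e \otimes F_f$ dominates every element of the principal ideal up to a scalar), dispenses with the regularity citations, and ties the section together by actually using Lemma \ref{idealded}, which the paper proves but then bypasses in this argument. The one point you should state carefully is the strong form of uniqueness: two Dedekind completions of $E_e \rtp F_f$ are linked by an isomorphism fixing $E_e \rtp F_f$ pointwise; this is what turns your abstract identification into the claimed ``equality'' with the concrete ideal $(E \rtpd F)_{e \otimes f}$, an identification the paper handles instead through its explicit embedding. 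What the paper's version buys in exchange is a more concrete dividend: it exhibits every positive element of $(E \rtpd F)_{e \otimes f}$ as the supremum of the elements of $E_e \rtpd F_f$ below it.
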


\begin{proof}
From Lemma \ref{density}, it follows that $E_e \rtp F_f$ is an order dense Riesz subspace of $(E \rtp F)_{e \otimes f}$. Since $(E \rtp F)_{e \otimes f}$ is full in $(E \rtpd F)_{e \otimes f}$ (see \cite{aliprantis1984order} for example), it follows that $E_e \rtpd F_f$ is an order dense Riesz subspace of $(E \rtpd F)_{e \otimes f}$. Now Pick $u$ a positive element in $(E \rtpd F)_{e \otimes f}$, there exists some positive real number $\alpha$ such that $0 \leq  u \leq \alpha e\otimes f$. This, together with Lemma \ref{density}, yield to
\begin{align*} 
u &=   \sup_{(E \rtpd F)} \{v \in E_e \rtpd F_f \text{ such that } 0 \leq v \leq u \leq \alpha e \otimes f \}
\end{align*}
The set $\mathfrak{D} =     \{v \in E_e \rtpd F_f \text{ such that } 0 \leq v \leq u \leq \alpha e \otimes f \}$ is bounded above in the Dedekind complete Riesz space $E_e \rtpd F_f$, then there is $u^\star$ in $E_e \rtpd F_f$ such that \begin{align*} 
u^\star &=   \sup_{E_e \rtpd F_f} \{v \in E_e \rtpd F_f \text{ such that } 0 \leq v \leq u \leq \alpha e \otimes f \}
\end{align*}
  \cite[Lemma 2.5]{gao2017uo} and \cite[Theorem 1.23]{aliprantis2003locally} together with the order denseness of $E_e \rtpd F_f$ in $(E \rtpd F)_{e \otimes f}$, yield to $$u = u^\star.$$ That is $E_e \rtpd F_f$ = $(E \rtpd F)_{e \otimes f}$, which makes an end to the proof.
\end{proof}

\begin{corollary}
Let $E$ and $F$ be two Archimedean Riesz spaces. Let $e$ be a positive element in $E$, and $f$ be a positive element in $F$. Let $E_e$ be the principal ideal generated by $e$ and  $F_f$ be the principal ideal generated by $f$. If $E\rtp F$ is Dedekind complete then $E_e \rtpd F_f$ is an ideal in $E\rtp F$.
\end{corollary}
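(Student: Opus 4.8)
The plan is to reduce the statement to Theorem \ref{idealprincded} by exploiting the fact that, under the Dedekind completeness hypothesis, the Dedekind complete tensor product collapses onto the Fremlin tensor product.

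First I would recall that by definition $E \rtpd F = (E \rtp F)^\delta$, the Dedekind completion of $E \rtp F$. The key observation is that a Dedekind complete Riesz space is its own Dedekind completion: a space $L$ always sits inside itself as a majorizing order dense Riesz subspace, and when $L$ is Dedekind complete it satisfies the defining requirement of a Dedekind completion of itself. By the uniqueness (up to Riesz isomorphism) of the Dedekind completion recorded in the Preliminaries, this forces $E \rtpd F = E \rtp F$ whenever $E \rtp F$ is Dedekind complete.

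Next I would invoke Theorem \ref{idealprincded}, which states $E_e \rtpd F_f = (E \rtpd F)_{e \otimes f}$. Combining this with the identification $E \rtpd F = E \rtp F$ from the previous step yields $E_e \rtpd F_f = (E \rtp F)_{e \otimes f}$, namely the principal ideal generated by $e \otimes f$ inside $E \rtp F$. Since every principal ideal is by definition solid, hence an ideal, this exhibits $E_e \rtpd F_f$ as an ideal in $E \rtp F$, which is exactly the assertion.

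Given that Theorem \ref{idealprincded} already does the substantive work, I do not expect a serious obstacle here; the only point requiring a little care is the clean justification that $E \rtpd F = E \rtp F$ as the \emph{same} Riesz space (not merely isomorphic abstractly), so that the principal ideal $(E \rtpd F)_{e \otimes f}$ computed in the completion genuinely coincides with the principal ideal $(E \rtp F)_{e \otimes f}$ sitting in $E \rtp F$, and in particular is solid \emph{in} $E \rtp F$ rather than in some larger enveloping space.
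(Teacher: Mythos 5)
Your proposal is correct and matches the paper's intent exactly: the paper states this corollary without proof, as an immediate consequence of Theorem \ref{idealprincded}, and the reduction you give---identifying $E\rtpd F$ with $E\rtp F$ via uniqueness of the Dedekind completion (the canonical embedding being onto since $E\rtp F$ is order dense, majorizing, and Dedekind complete in its own completion) and then reading off $E_e \rtpd F_f = (E\rtp F)_{e\otimes f}$---is precisely the intended argument. Your closing remark about needing the identification to hold for the canonical embedding, not merely up to abstract isomorphism, is the right point of care and is settled by the regularity of order dense subspaces (Theorem \ref{regular}).
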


The next corollary can be easily obtained when we consider $E\rtpd \mathbb{R}$.

\begin{corollary}
Let $E$ be a Riesz space and $0 \leq e \in E$. Then the Dedekind completion of the principal ideal generated by $e$ is the principal ideal generated by $e$ in the Dedekind completion. That is, $\overline{E_e}^\delta =  (\overline{E}^\delta)_e$
\end{corollary}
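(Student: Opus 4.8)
The plan is to specialize Theorem~\ref{idealprincded} to the case $F=\mathbb{R}$ and $f=1$, exploiting the fact that $\mathbb{R}$ acts as a unit for the Fremlin tensor product. The decisive observation suggested by the paper is simply that tensoring with $\mathbb{R}$ changes nothing, so everything reduces to an already-proved identity about principal ideals in a Dedekind complete tensor product.

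First I would record the elementary identification $E\rtp\mathbb{R}\cong E$. The map $(x,\lambda)\mapsto\lambda x$ is a Riesz bimorphism from $E\times\mathbb{R}$ into $E$: for fixed $\lambda\ge 0$ it is a Riesz homomorphism in $x$, and for fixed $x\ge 0$ the map $\lambda\mapsto\lambda x$ is a Riesz homomorphism on $\mathbb{R}$. One checks directly that $E$ together with this bimorphism satisfies the universal property $(i)$--$(iv)$ of Theorem~\ref{fremlin}, so by uniqueness it is Riesz isomorphic to $E\rtp\mathbb{R}$, with $x\otimes\lambda$ corresponding to $\lambda x$. Passing to Dedekind completions gives $E\rtpd\mathbb{R}=(E\rtp\mathbb{R})^\delta\cong\overline{E}^\delta$, under which $e\otimes 1$ is identified with $e$; applying the same identification to the Riesz space $E_e$ yields $E_e\rtpd\mathbb{R}\cong\overline{E_e}^\delta$.

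Next I would note that the principal ideal of $\mathbb{R}$ generated by $1$ is all of $\mathbb{R}$, i.e.\ $\mathbb{R}_1=\mathbb{R}$. Applying Theorem~\ref{idealprincded} with $F=\mathbb{R}$ and $f=1$ then gives
\[
E_e\rtpd\mathbb{R}=E_e\rtpd\mathbb{R}_1=(E\rtpd\mathbb{R})_{e\otimes 1}.
\]
Substituting the two identifications from the previous step --- the left-hand side being $\overline{E_e}^\delta$ and the right-hand side being $(\overline{E}^\delta)_e$ --- I would conclude $\overline{E_e}^\delta=(\overline{E}^\delta)_e$, as required.

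The only point demanding care is the bookkeeping of the isomorphism $E\rtp\mathbb{R}\cong E$ through the Dedekind completion and the principal-ideal constructions: one must confirm that forming the principal ideal generated by $e\otimes 1$ in $E\rtpd\mathbb{R}$ corresponds, under the isomorphism $E\rtpd\mathbb{R}\cong\overline{E}^\delta$, exactly to the principal ideal generated by $e$ in $\overline{E}^\delta$. Since a Riesz isomorphism carries the principal ideal generated by an element onto the principal ideal generated by its image, this is immediate once the correspondence $e\otimes 1\mapsto e$ is in hand. No genuinely hard step is expected; the corollary is, as the paper indicates, a direct specialization of Theorem~\ref{idealprincded}.
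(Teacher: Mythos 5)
Your proposal is correct and follows exactly the route of the paper's own proof: specializing Theorem~\ref{idealprincded} to $F=\mathbb{R}$, $f=1$ and identifying $E_e\rtpd\mathbb{R}$ with $\overline{E_e}^\delta$ and $(E\rtpd\mathbb{R})_{e\otimes 1}$ with $(\overline{E}^\delta)_e$. The only difference is that you spell out the verification of the identification $E\rtp\mathbb{R}\cong E$ and its compatibility with Dedekind completions and principal ideals, which the paper treats as immediate.
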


\begin{proof}
Let $E$ be a Riesz space and $0 \leq e \in E$.  Theorem \ref{idealprincded} yields to, $$\begin{array}{lcl}
 \overline{E_e}^\delta    &  = & E_e\rtpd \mathbb{R} \\
     & = & (E\rtpd \mathbb{R})_{e \otimes 1}\\
     & = &  (\overline{E}^\delta)_e
\end{array} $$
which makes an end to our proof.
\end{proof}

\begin{theorem}\label{ideals}
 Let $E$ and $F$ be two Archimedean Riesz spaces and $A$ and $B$ be two ideals of $E$ and $F$ respectively. Then $A \rtpd B$ is an ideal in $E\rtpd F$.
\end{theorem}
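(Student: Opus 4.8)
The plan is to realize $A \rtpd B$ concretely as the set
\[
I := \bigcup_{a \in A^+,\; b \in B^+} (E \rtpd F)_{a \otimes b}
\]
sitting inside $E \rtpd F$, and then to identify it with the Dedekind completion of $A \rtp B$ by appealing to the uniqueness of Dedekind completions. First I would check that $I$ really is an ideal of $E \rtpd F$. Solidity is immediate because each principal ideal $(E \rtpd F)_{a \otimes b}$ is solid. For the subspace property, given $h_i \in (E \rtpd F)_{a_i \otimes b_i}$ for $i=1,2$, I set $a := a_1 \vee a_2$ and $b := b_1 \vee b_2$; since $A$ and $B$ are ideals, hence Riesz subspaces, we have $a \in A^+$ and $b \in B^+$, while $0 \le a_i \le a$ and $0 \le b_i \le b$ give $a_i \otimes b_i \le a \otimes b$ (the tensor map is a bimorphism, so monotone on positives). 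Thus $h_1,h_2 \in (E \rtpd F)_{a \otimes b}$ and $h_1+h_2 \in I$. So the defining family is upward directed and $I$ is an ideal; being an ideal of the Dedekind complete space $E \rtpd F$, it is itself Dedekind complete.

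Next I would identify the building blocks through Theorem \ref{idealprincded}: for $a \in A^+$, $b \in B^+$ one has $(E \rtpd F)_{a \otimes b} = E_a \rtpd F_b$, and because $A,B$ are ideals, $E_a \subseteq A$ and $F_b \subseteq B$. I then locate $A \rtp B$ inside $I$. By Grobler's result $A \rtp B$ is a Riesz subspace of $E \rtp F$, and by the boundedness property (B) every $h \in A \rtp B$ satisfies $|h| \le a \otimes b$ for some $(a,b) \in A^+ \times B^+$; hence $h \in (E \rtp F)_{a \otimes b} \subseteq (E \rtpd F)_{a \otimes b} \subseteq I$, so $A \rtp B \subseteq I$.

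It then remains to show that $A \rtp B$ is majorizing and order dense in $I$, which makes $I$ a Dedekind completion of $A \rtp B$ and therefore forces $A \rtpd B = I$. Majorizing is clear: any $w \in I$ satisfies $|w| \le \lambda\, a \otimes b$ with $a \otimes b \in A \otimes B \subseteq A \rtp B$. For order density, take $0 < w \in I$, say $w \in (E \rtpd F)_{a \otimes b} = E_a \rtpd F_b$; since a space is always order dense in its Dedekind completion, there is $0 < h \le w$ with $h \in E_a \rtp F_b$, and $E_a \rtp F_b \subseteq A \rtp B$ (this is exactly the order-density content already packaged in Proposition \ref{density}, ultimately Lemma \ref{Yoshida} together with property (OD)). Invoking the uniqueness of the Dedekind completion, the inclusion $A \rtp B \hookrightarrow I$ extends to a Riesz isomorphism $A \rtpd B \to I$ fixing $A \rtp B$, so $A \rtpd B = I$ is an ideal of $E \rtpd F$.

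The step I expect to be the main obstacle is this final identification of the abstract object $A \rtpd B$ with the concrete ideal $I$: one must be careful that the Fremlin embedding $A \rtp B \hookrightarrow E \rtp F \subseteq E \rtpd F$, the principal-ideal equality of Theorem \ref{idealprincded}, and the passage to Dedekind completions are mutually compatible, so that ``the Dedekind completion of $A \rtp B$'' may legitimately be read off inside $E \rtpd F$ as $I$. Once the order-density claim and this compatibility are secured, the directedness and solidity bookkeeping that shows $I$ is an ideal is entirely routine.
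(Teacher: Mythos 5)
Your proposal is correct, and it rests on the same two pillars as the paper's proof --- the boundedness property (B) and the principal-ideal identity $(E \rtpd F)_{a \otimes b} = E_a \rtpd F_b$ of Theorem \ref{idealprincded} --- but the architecture is genuinely different. The paper's argument is a four-line solidity check: it takes for granted that $A \rtpd B$ already sits inside $E \rtpd F$, picks $0 \le v \le u$ with $u \in A \rtpd B$ and $v \in E \rtpd F$, dominates $u$ by some $x \otimes y$ with $(x,y) \in A_+ \times B_+$ via property (B), and concludes $v \in (E \rtpd F)_{x \otimes y} = E_x \rtpd F_y \subseteq A \rtpd B$ from Theorem \ref{idealprincded}; the linear-subspace half of being an ideal, and the embedding itself, are left implicit. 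You instead build the candidate object from scratch: the directed union $I = \bigcup_{a,b} (E \rtpd F)_{a \otimes b}$, which you verify is an ideal (the $a_1 \vee a_2$, $b_1 \vee b_2$ directedness argument is exactly right), and then you identify $A \rtpd B$ with $I$ by showing $A \rtp B$ is majorizing and order dense in $I$ and invoking uniqueness of the Dedekind completion. What your route buys is precisely what the paper glosses over: it gives meaning to the assertion ``$A \rtpd B$ is an ideal \emph{in} $E \rtpd F$'' by constructing the canonical copy of $(A \rtp B)^\delta$ inside $E \rtpd F$, rather than presupposing it. What it costs is the extra machinery (uniqueness of completions, and the compatibility of the embeddings $E_a \rtp F_b \hookrightarrow A \rtp B \hookrightarrow E \rtp F$ with $E_a \rtp F_b \hookrightarrow E_a \rtpd F_b$), though the compatibility you flag as the main obstacle is unproblematic: all these maps are Riesz homomorphisms agreeing on the algebraic tensor product $E_a \otimes F_b$, which is relatively uniformly dense, and Archimedean Riesz spaces separate such extensions --- a point the paper itself never addresses, so your version is if anything the more complete one.
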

 
\begin{proof}
Pick a positive element $u$ in $A \rtpd B$ and a positive element $v$ in $E\rtpd F$, such that $$0 \leq v \leq u.$$
From the Property (B), there are $(x,y)$ in $A_+ \times B_+$ such that $$ 0 \leq v \leq u \leq x\otimes v.$$ The principal ideals generated by $x$ in $A$ and $y$ in $B$ are principal ideals in $E$ and $F$ respectively, it follows from Theorem \ref{idealprincded} that $u$ is in the principal ideal $E_x \rtpd F_y$. This yields to $$v \in E_x \rtpd F_y \subset A\rtpd B.$$ 
\end{proof}

As an immediate consequence, one can derive the next corollary.
\begin{corollary}
Let $E$ and $F$ two Archimedean Riesz spaces and $A$ and $B$ be two ideals of $E$ and $F$ respectively. If $E\rtp F$ is Dedekind complete then $A \rtpd B$ is an ideal in $E\rtp F$.
\end{corollary}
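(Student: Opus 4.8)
The plan is to show that $A \rtpd B$ is a solid subspace of $E \rtpd F$. Since $A \rtpd B$ is a Riesz subspace of $E \rtpd F$, I would first reduce solidity to the positive cone: it suffices to prove that whenever $u \in (A \rtpd B)_+$ and $v \in E \rtpd F$ satisfy $0 \le v \le u$, one has $v \in A \rtpd B$. The general solidity statement $|w| \le |u|$ with $u \in A \rtpd B$ then follows, because $|u| \in A \rtpd B$ (Riesz subspace), and applying the reduced statement to $w^+$ and $w^-$, both dominated by $|u|$, places each in $A \rtpd B$, so that $w = w^+ - w^- \in A \rtpd B$.

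The first real step is to dominate $u$ by a single elementary tensor. Viewing $A$ and $B$ as Archimedean Riesz spaces in their own right and applying the boundedness Property (B) of Theorem \ref{tensded} to $u \in A \rtpd B$, I obtain a pair $(x,y) \in A_+ \times B_+$ with $u \le x \otimes y$; hence $0 \le v \le u \le x \otimes y$ in $E \rtpd F$. This converts the general ideal problem into a question about the principal ideal generated by $x \otimes y$.

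Next I would pass to principal ideals and invoke Theorem \ref{idealprincded}. Because $A$ is an ideal of $E$ containing $x$, the principal ideal of $A$ generated by $x$ coincides with $E_x$, and likewise the principal ideal of $B$ generated by $y$ coincides with $F_y$. Theorem \ref{idealprincded}, applied to $(E,F)$, identifies $(E \rtpd F)_{x \otimes y}$ with $E_x \rtpd F_y$; since $v \in E \rtpd F$ and $0 \le v \le x \otimes y$, this gives $v \in E_x \rtpd F_y$. Applying Theorem \ref{idealprincded} again, this time to $(A,B)$, identifies $A_x \rtpd B_y$ with $(A \rtpd B)_{x \otimes y}$, a principal ideal sitting inside $A \rtpd B$. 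Since $E_x = A_x$ and $F_y = B_y$, the two identifications meet, and I conclude $v \in E_x \rtpd F_y = A_x \rtpd B_y \subseteq A \rtpd B$.

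The step I expect to be the main obstacle is not analytic but structural: making rigorous that the two applications of Theorem \ref{idealprincded}, one carried out inside $E \rtpd F$ and one inside $A \rtpd B$, produce the \emph{same} copy of the principal ideal generated by $x \otimes y$, so that the chain $v \in (E \rtpd F)_{x \otimes y} = E_x \rtpd F_y = A_x \rtpd B_y = (A \rtpd B)_{x \otimes y}$ genuinely closes. This requires checking that $A \rtpd B$ embeds in $E \rtpd F$ as a Riesz subspace on which the elementary tensor $x \otimes y$ and the order are computed compatibly, so that the domination $u \le x \otimes y$ and the principal-ideal identifications are preserved under the embedding. Once this compatibility is in place, all the analytic content has already been supplied by Theorem \ref{idealprincded}, and the proof closes.
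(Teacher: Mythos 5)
There is a genuine gap, and it is exactly the step that makes this statement a corollary rather than a restatement of Theorem \ref{ideals}. Every step of your argument takes place inside $E\rtpd F$: what you establish is that $A\rtpd B$ is an ideal in $E\rtpd F$. But that is precisely Theorem \ref{ideals}, which the paper has already proved, and by essentially your route (Property (B) applied inside $A\rtpd B$ to dominate $u$ by $x\otimes y$ with $(x,y)\in A_+\times B_+$, then Theorem \ref{idealprincded} together with the observation that $A_x=E_x$ and $B_y=F_y$ because $A$ and $B$ are solid). The corollary asserts something formally different: that $A\rtpd B$ is an ideal in the Fremlin tensor product $E\rtp F$, under the hypothesis that $E\rtp F$ is Dedekind complete. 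Your proposal never uses that hypothesis and its conclusion never lands in $E\rtp F$ --- a reliable sign that the intended content of the statement has been bypassed.

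The missing piece is the identification $E\rtpd F = E\rtp F$ under the stated hypothesis, which is the only content the corollary adds to Theorem \ref{ideals}. By definition $E\rtpd F=(E\rtp F)^\delta$, and $E\rtp F$ sits in its Dedekind completion as an order dense, majorizing Riesz subspace. If $E\rtp F$ is Dedekind complete, take $0\le w\in E\rtpd F$; the set $D=\{v\in E\rtp F : 0\le v\le w\}$ is bounded above in $E\rtp F$ (majorizing), so $w'=\sup_{E\rtp F}D$ exists; since $E\rtp F$ is order dense, hence regular in $E\rtpd F$ by Theorem \ref{regular}, $w'$ is also the supremum of $D$ computed in $E\rtpd F$, and order denseness gives $w=\sup_{E\rtpd F}D$, so $w=w'\in E\rtp F$. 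Splitting a general element into positive and negative parts yields $E\rtpd F=E\rtp F$, and then ``ideal in $E\rtpd F$'' literally means ``ideal in $E\rtp F$'', so the corollary follows from Theorem \ref{ideals} in one line --- which is what the paper means by ``immediate consequence''. Your reduction of solidity to positive elements and your concern about compatibility of the two applications of Theorem \ref{idealprincded} are both reasonable, but they belong to the proof of Theorem \ref{ideals}, not to this corollary; to repair the proposal, cite that theorem (or keep your re-proof of it) and add the completion argument above.
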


\begin{lemma}
    Let $E$ and $F$ be Riesz spaces, $e$ and $f$ be positive elements in $E$ and $F$, respectively. Let $B_e$ be the principal band generated by $e$, $B_f$ be the principal band generated by $f$ and $v$ be an element of $\overline{(B_e \rtp B_f)}^\delta$. Then 
    \[ v= \sup \{u\in B_e\otimes B_f: 0\leq u\leq v\}.\] 
    \begin{proof}
    The set $\{u\in B_e\otimes B_f: 0\leq u\leq v\}$ is bounded above by $v$ and the supremum exists. Let \[w= \sup \{u\in B_e\otimes B_f: 0\leq u\leq v\}\]
    and let $w\ne v$. Then $0\le w < v$ and it implies that $w\in \overline{(B_e \rtp B_f)}^\delta$. It follows that $0\le v-w$. By the property (OD) in \cite{grob2022}, there exist $(x,y)\in {B_e}^+ \times {B_f}^+$ such that 
    \begin{align*}
        0 &< x\otimes y <v-w \\
        0 &< w< w+x\otimes y< v \\
        u & +x\otimes y \le w \\
        u &\le w-x\otimes y
       \end{align*} for all $u\in B_e\otimes B_f$.
       It follows that $w\le w-(x\otimes y)$ which is impossible and we have a contradiction. So $w=v$ and this completes the proof.

    \end{proof}
\end{lemma}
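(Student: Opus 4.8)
The plan is to argue by contradiction, exploiting that the displayed set is bounded above by $v$ and that strictly positive elements of the completion dominate a strictly positive elementary tensor, by Property (OD).

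First I would record that, by the very definition of the Dedekind complete tensor product, $\overline{(B_e \rtp B_f)}^\delta$ coincides with $B_e \rtpd B_f$, so it is a Dedekind complete Riesz space to which Theorem~\ref{tensded} applies. Consequently the set $S=\{u\in B_e\otimes B_f: 0\leq u\leq v\}$, being bounded above by $v$, has a supremum $w$ in $\overline{(B_e \rtp B_f)}^\delta$, and $0\le w\le v$. The whole task is then to rule out $w\neq v$.

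So suppose, toward a contradiction, that $w\neq v$, i.e.\ $0<v-w$. Applying Property (OD) of Theorem~\ref{tensded} to the strictly positive element $v-w$ of $B_e\rtpd B_f$, I obtain a pair $(x,y)\in (B_e)_+\times (B_f)_+$ with $0<x\otimes y\le v-w$. The key point is that $x\otimes y$ is a member of the \emph{algebraic} tensor product $B_e\otimes B_f$, which is a linear subspace and hence closed under addition. Therefore, for each $u\in S$ the element $u+x\otimes y$ again lies in $B_e\otimes B_f$ and satisfies $0\le u+x\otimes y\le w+(v-w)=v$; that is, $u+x\otimes y\in S$. Since $w=\sup S$ is an upper bound of $S$, this forces $u+x\otimes y\le w$, equivalently $u\le w-x\otimes y$, for every $u\in S$. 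Passing to the supremum over $u\in S$ yields $w\le w-x\otimes y$, whence $x\otimes y\le 0$, contradicting $x\otimes y>0$. Thus $w=v$, as required.

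The routine part is the contradiction itself; the only genuine obstacle is the bookkeeping that makes Property (OD) legitimately available. One must identify $\overline{(B_e \rtp B_f)}^\delta$ with $B_e\rtpd B_f$ so that Theorem~\ref{tensded} may be invoked, and one must be careful that the element (OD) produces, $x\otimes y$, truly belongs to the algebraic tensor product $B_e\otimes B_f$ rather than merely to $B_e\rtp B_f$; this is precisely what allows the closure-under-addition step to keep $u+x\otimes y$ inside $S$. (Alternatively, the same conclusion follows from transitivity of order density: by Theorem~\ref{fremlin}(iv) the algebraic tensor product $B_e\otimes B_f$ is order dense in $B_e\rtp B_f$, which is in turn order dense in its Dedekind completion, and an order dense subspace recovers every positive $v$ as the supremum of the positive members it dominates. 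I would, however, prefer the direct contradiction above, since it keeps the role of Property (OD) explicit.)
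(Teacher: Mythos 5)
Your proof is correct and follows essentially the same route as the paper's: form $w=\sup\{u\in B_e\otimes B_f: 0\le u\le v\}$, suppose $w\ne v$, apply Property (OD) to $v-w$ to get $0<x\otimes y\le v-w$, and derive the contradiction $w\le w-x\otimes y$. In fact your write-up is tighter than the paper's, since you explicitly justify the step the paper leaves implicit, namely that $u+x\otimes y$ lies back in the set $S$ (because $x\otimes y\in B_e\otimes B_f$ and $u+x\otimes y\le w+(v-w)=v$), which is exactly what licenses the inequality $u+x\otimes y\le w$.
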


 Now we have the material which we need to prove the next theorem.
 \begin{theorem}\label{pricband}
Let $E$ and $F$ be Riesz spaces. Let $e$ be a positive element in $E$, and $f$ be a positive element in $F$. Let $B_e$ be the principal band generated by $e$ and  $B_f$ be the principal band generated by $f$. Then $B_e \rtp_\delta B_f$ is equal to the principal band in $E\rtpd F$ generated by  ${(e \otimes f)}$.
 \end{theorem}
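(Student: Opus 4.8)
The plan is to identify the principal band $(e{\otimes}_\delta f)^{dd}$ with the Dedekind completion of $B_e\rtp B_f$, and hence with $B_e\rtpd B_f=\overline{B_e\rtp B_f}^\delta$. Two facts are in hand at the outset. First, $(e{\otimes}_\delta f)^{dd}$ is a band in the Dedekind complete space $E\rtpd F$, so by \cite[Theorem 12.2]{zaanen2012introduction} it is itself Dedekind complete. Second, by the Corollary following Theorem \ref{princbands}, $B_e\rtp B_f$ is an order dense Riesz subspace of $(e{\otimes}_\delta f)^{dd}$; since $B_e\otimes B_f$ is order dense in $B_e\rtp B_f$, the algebraic tensor $B_e\otimes B_f$ is order dense in $(e{\otimes}_\delta f)^{dd}$ as well.

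First I would construct a canonical comparison map. The Riesz bimorphism $(x,y)\mapsto x\otimes y$ from $B_e\times B_f$ into $E\rtpd F$ is order continuous, by Theorem \ref{oc} together with the fact that $B_e\rtp B_f$ is regular in $E\rtpd F$ (Theorem \ref{regular}, applied to the order dense inclusions). Since $E\rtpd F$ is Dedekind complete, property (D5) of Theorem \ref{tensded} yields an order continuous Riesz homomorphism $T\colon B_e\rtpd B_f\to E\rtpd F$ with $T(x\otimes y)=x\otimes y$ for all $x\in B_e$, $y\in B_f$. Using the previous lemma, which gives $v=\sup\{u\in B_e\otimes B_f:0\le u\le v\}$ for every positive $v\in B_e\rtpd B_f$, together with order continuity, I would check that $T$ is injective: if $Tv=0$ with $v>0$, then each $u\in B_e\otimes B_f$ with $0\le u\le v$ satisfies $u=Tu\le Tv=0$, contradicting the supremum representation. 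Thus $T$ is a Riesz isomorphism onto its image; and since $T$ fixes $B_e\otimes B_f\subseteq(e{\otimes}_\delta f)^{dd}$ and $(e{\otimes}_\delta f)^{dd}$ is order closed, order continuity of $T$ forces $T(B_e\rtpd B_f)\subseteq(e{\otimes}_\delta f)^{dd}$. Identifying $B_e\rtpd B_f$ with this image gives the inclusion $B_e\rtpd B_f\subseteq(e{\otimes}_\delta f)^{dd}$ as a Dedekind complete, order dense Riesz subspace.

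It remains to prove the reverse inclusion, and this is the step I expect to be the main obstacle. The clean way to finish is to show that $B_e\rtp B_f$ is \emph{majorizing} in $(e{\otimes}_\delta f)^{dd}$: once order density and the majorizing property both hold, uniqueness of the Dedekind completion (\cite[Theorem 2.24]{aliprantis2006positive}) forces $(e{\otimes}_\delta f)^{dd}$ to coincide with $\overline{B_e\rtp B_f}^\delta=B_e\rtpd B_f$. To attempt majorizing I would take $0\le u\in(e{\otimes}_\delta f)^{dd}$ and use Property (B) of Theorem \ref{tensded} to get $u\le x\otimes y$ with $x\in E_+$, $y\in F_+$. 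Projecting onto the principal band $(e{\otimes}_\delta f)^{dd}$ inside the Dedekind complete space $E\rtpd F$ and invoking the order continuity of $\sigma$, one computes that the band projection of $x\otimes y$ equals $\sup_{n,m}(x\wedge ne)\otimes(y\wedge mf)$, and $u$ is dominated by this supremum.

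The delicate point is that the dominating element must be located inside $B_e\rtp B_f$; equivalently, the increasing net $(x\wedge ne)\otimes(y\wedge mf)$ must be bounded above \emph{within} $B_e\rtp B_f$, so that its supremum is actually adjoined in passing to $\overline{B_e\rtp B_f}^\delta$. This is transparent when $B_e$ and $B_f$ are projection bands, since then $\sup_n(x\wedge ne)\in B_e$ and $\sup_m(y\wedge mf)\in B_f$ and their tensor dominates $u$; in particular it holds whenever $E$ and $F$ are Dedekind complete, by \cite[Theorem 12.2]{zaanen2012introduction}. Securing this boundedness in full generality is precisely the analogue of the place where the proof of Theorem \ref{idealprincded} relied on the single principal bound $\alpha\,e\otimes f\in E_e\rtpd F_f$. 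Here no such single bound is available a priori from Property (B), so controlling the net $(x\wedge ne)\otimes(y\wedge mf)$ within $B_e\rtp B_f$ is the crux on which the whole theorem turns.
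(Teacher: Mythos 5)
Your proposal stops short of a proof: the reverse inclusion, namely that the principal band generated by $e\otimes f$ in $E\rtpd F$ is contained in $B_e\rtpd B_f$, is exactly the part you leave open, and the majorization strategy you outline cannot close it. Property (B) only supplies a majorant $x\otimes y$ with $x\in E_+$, $y\in F_+$, and, as you yourself observe, the increasing net $(x\wedge ne)\otimes(y\wedge mf)$ need not be order bounded \emph{within} $B_e\rtp B_f$; hence the appeal to uniqueness of the Dedekind completion (which requires $B_e\rtp B_f$ to be majorizing, not merely order dense, in the band) is unavailable. As written, your argument is complete only under the added hypothesis that $B_e$ and $B_f$ are projection bands (e.g. when $E$ and $F$ are Dedekind complete). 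The first half of your proposal (the inclusion $B_e\rtpd B_f\subseteq(e{\otimes}_\delta f)^{dd}$, via an order continuous homomorphism fixing $B_e\otimes B_f$) is sound, and in fact parallels what the paper obtains from Theorem \ref{princbands}.

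The paper closes the gap with a retraction instead of a majorant, and this is the idea your attempt is missing. It applies the universal property (D5) of Theorem \ref{tensded} (citing \cite[Theorem 5.1]{grob2022}) not to the inclusion bimorphism $B_e\times B_f\to E\rtpd F$, as you do, but to the bimorphism $\sigma(x,y)=P_e(x)\otimes P_f(y)$ from $E\times F$ into $B_e\rtpd B_f$, where $P_e$, $P_f$ are the band projections associated with $B_e$, $B_f$. This yields an order continuous Riesz homomorphism $T\colon E\rtpd F\to B_e\rtpd B_f$ which fixes every element of $B_e\rtp B_f$. Given $0\le u$ in the band generated by $e\otimes f$, order density of $B_e\rtp B_f$ in that band (Theorem \ref{princbands}) provides a net $u_\alpha\in B_e\rtp B_f$ with $u_\alpha\xrightarrow{o}u$; then $u_\alpha=T(u_\alpha)\xrightarrow{o}T(u)$, whence $u=T(u)\in B_e\rtpd B_f$. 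No upper bound inside $B_e\rtp B_f$ is ever needed. Note, however, that your identification of the crux is vindicated in one respect: the paper's map $\sigma$ is only defined if the projections $P_e$ and $P_f$ exist, i.e. if $B_e$ and $B_f$ are projection bands, which can fail for principal bands in a general Archimedean space (e.g. in $C[0,1]$). So the paper's proof implicitly carries the very hypothesis under which your own argument already succeeds; it is simply hidden in the definition of $\sigma$ rather than flagged, as you flag it.
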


\begin{proof}

$B_e \rtpd B_f$ is an order dense ideal in $(e\rtp f)^{dd}$ (see \ref{princbands}). It remains to prove that $(e\rtp f)^{dd}$ is included in $B_e \rtpd B_f$ . To this aim, let
\[\begin{array}{lcll}
        \sigma: & E \times F & \longrightarrow & B_e\rtpd B_f  \\
         &  (x,y) & \longmapsto& P_e(x)\otimes P_f(y) 
    \end{array}\]

be an order continuous Riesz bimorphism. By \cite[Theorem 5.1]{grob2022}, there exists an order continuous Riesz homomorphism $T$ such that
\[\begin{array}{lcll}
        T: & E\rtpd F & \longrightarrow & B_e\rtpd B_f  \\  
    \end{array}\]

and        $  T(x\otimes y) = \sigma (x,y) = P_e(x)\otimes P_f(y) $ for all $x$ in $E$, $y$ in $F$.

Now pick a positive element $u\in (e\rtp f)^{dd}$. Since $B_e\rtp B_f$ is order dense in $(e\rtp f)^{dd}$, we can find a positive net $u_\alpha \in B_e \rtp B_f$ such that $u_\alpha \xrightarrow{o} u$. It follows from the order continuity of $T$ that $T(u_\alpha) \xrightarrow{o} T(u)$.
As $u_\alpha \in B_e \rtp B_f$, we can write
\[
u_\alpha =\sum_{i=1}^{n_1} \bigvee_{j=1}^{n_2} \bigwedge_{k=1}^{n_3} {x^\alpha}_{ijk} \otimes {y^\alpha}_{ijk}
\]
for all $i,j,k,\alpha$, where ${x^\alpha}_{ijk} \in B_e$  and ${y^\alpha}_{ijk} \in B_f$.
Since $T$ is a Riesz homomorphism, it follows that
\begin{align*}
    T(u_\alpha) &= \sum_{i=1}^{n_1} \bigvee_{j=1}^{n_2} \bigwedge_{k=1}^{n_3} T({x^\alpha}_{ijk} \otimes {y^\alpha}_{ijk}) \\
    &= \sum_{i=1}^{n_1} \bigvee_{j=1}^{n_2} \bigwedge_{k=1}^{n_3} P_e({x^\alpha}_{ijk}) \otimes P_f({y^\alpha}_{ijk}) \\
    &= \sum_{i=1}^{n_1} \bigvee_{j=1}^{n_2} \bigwedge_{k=1}^{n_3} {x^\alpha}_{ijk} \otimes {y^\alpha}_{ijk} \\
    &= u_\alpha
    \end{align*}
    Then we have $T(u_\alpha) \xrightarrow{o} T(u) =u$ which implies $u\in B_e\rtpd B_f$. This completes the proof.
\end{proof}

The next corollary follows easily from Theorem \ref{pricband}, when we consider $E\rtpd \mathbb{R}$.

\begin{corollary}
Let $E$ be a Riesz space and $0 \leq e \in E$. Then the Dedekind completion of the principal band generated by $e$ is the principal band generated by $e$ in the Dedekind completion. That is, $\overline{B_e}^\delta =  (\overline{B}^\delta)_e$.
\end{corollary}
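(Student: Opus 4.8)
The plan is to mirror the proof of the ideal corollary displayed just above (the one with $\overline{E_e}^\delta = (\overline{E}^\delta)_e$), specializing the band theorem \ref{pricband} to the case $F = \mathbb{R}$ and $f = 1$. First I would record the two identifications that make the scalar factor transparent. On the one hand, the Fremlin tensor product of any Archimedean Riesz space with the scalars is canonically the space itself, $E \rtp \mathbb{R} = E$ via $x \otimes \lambda \mapsto \lambda x$, so passing to Dedekind completions gives $E \rtpd \mathbb{R} = \overline{E}^\delta$. On the other hand, in $\mathbb{R}$ the band generated by $1$ is all of $\mathbb{R}$, i.e.\ $B_1 = \mathbb{R}$; consequently $B_e \rtp B_1 = B_e \rtp \mathbb{R} = B_e$ and therefore $B_e \rtpd B_1 = \overline{B_e}^\delta$.

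Next I would invoke Theorem \ref{pricband} with $f = 1$. It asserts that $B_e \rtpd B_1$ equals the principal band generated by $e \otimes 1$ in $E \rtpd \mathbb{R}$. Combining this with the identifications above, the left-hand side is $\overline{B_e}^\delta$, while the ambient space on the right is $\overline{E}^\delta$, in which $e \otimes 1$ is just $e$; so the relevant band is the principal band $(\overline{E}^\delta)_e$ generated by $e$ in the completion. Chaining the equalities yields
\[
\overline{B_e}^\delta \;=\; B_e \rtpd \mathbb{R} \;=\; (E \rtpd \mathbb{R})\text{-band of } e \otimes 1 \;=\; (\overline{E}^\delta)_e,
\]
which is exactly the array structure used in the proof of the ideal corollary, but with Theorem \ref{pricband} replacing Theorem \ref{idealprincded}.

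The step that warrants the most care is the book-keeping of the two identifications with the scalar field, since the statement freely mixes the Dedekind-complete tensor product $\rtpd$ with ordinary Dedekind completion $(\cdot)^\delta$. I would check that the canonical isomorphism $E \rtp \mathbb{R} \cong E$ is both order and lattice preserving, so that it carries $B_e \rtp \mathbb{R}$ onto $B_e$ and the band generated by $e \otimes 1$ onto the band generated by $e$; once this is granted, the conclusion is a direct substitution into Theorem \ref{pricband}. No genuinely new estimate is required here, as the entire analytic content has already been front-loaded into the order-density argument of Theorem \ref{pricband}.
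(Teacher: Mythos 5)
Your proposal is correct and follows essentially the same route as the paper: the paper's proof is precisely the chain $\overline{B_e}^\delta = B_e \rtpd \mathbb{R} = (e\otimes 1)^{dd} = (\overline{E}^\delta)_e$, i.e.\ Theorem \ref{pricband} specialized to $F=\mathbb{R}$, $f=1$, combined with the canonical identifications $B_1=\mathbb{R}$ and $E\rtp\mathbb{R}\cong E$. Your version merely makes explicit the lattice-isomorphism book-keeping that the paper leaves implicit, exactly as in the ideal corollary it mirrors.
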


\begin{proof}
Let $E$ be a Riesz space and $0 \leq e \in E$.  Theorem \ref{pricband} yields to, $$\begin{array}{lcl}
 \overline{B_e}^\delta    &  = & B_e\rtpd \mathbb{R} \\
     & = & ({e \otimes 1})^{\text{dd}}\\
     & = &  (\overline{B}^\delta)_e
\end{array} $$
and this completes the proof.
\end{proof}

Now we have the results on the Dedekind complete tensor product of projection bands.

\begin{corollary}
Let $E$ and $F$ be two Riesz spaces with weak units $e$ and $f$ respectively. Then if $A_1$ and $A_2$ are projection bands in $E$ and $F$ respectively then $A_1\rtpd A_2$ is a projection band in $E\rtpd F$.
\end{corollary}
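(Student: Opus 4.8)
The plan is to reduce the statement to Theorem \ref{pricband} by showing that, in the presence of a weak order unit, a projection band is automatically a \emph{principal} band, and then to invoke the Dedekind completeness of $E\rtpd F$ to upgrade ``band'' to ``projection band''. So the only genuine work is identifying $A_1$ and $A_2$ with principal bands; everything after that is a direct citation.

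First I would exploit the projection band hypothesis to manufacture canonical generators. Since $A_1$ is a projection band in $E$, the band projection $P_{A_1}\colon E\to E$ exists; set $e_1=P_{A_1}(e)\in A_1$ and write $e=e_1+e_1'$ with $e_1'\in A_1^d$, and similarly set $f_1=P_{A_2}(f)\in A_2$. The claim is that $e_1$ is a weak order unit of $A_1$. To see this, take $x\in A_1^+$ with $x\wedge e_1=0$; since $x\in A_1$ and $e_1'\in A_1^d$ we also have $x\wedge e_1'=0$, and because $\{x\}^d$ is a band and hence closed under addition, $x$ is disjoint from $e_1+e_1'=e$. As $e$ is a weak order unit of $E$, this forces $x=0$, so $e_1$ is indeed a weak order unit of $A_1$. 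Consequently the band generated by $e_1$ equals $A_1$, i.e.\ $A_1=B_{e_1}$, and the same argument gives $A_2=B_{f_1}$.

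Next I would apply Theorem \ref{pricband} to the principal bands $B_{e_1}$ and $B_{f_1}$: it yields that
\[
A_1\rtpd A_2=B_{e_1}\rtpd B_{f_1}=(e_1\otimes f_1)^{dd},
\]
the principal band in $E\rtpd F$ generated by $e_1\otimes f_1$. In particular $A_1\rtpd A_2$ is a band in $E\rtpd F$. Finally, by Theorem \ref{tensded} the space $E\rtpd F$ is Dedekind complete, and in a Dedekind complete Riesz space every band is a projection band (the cited result of Zaanen). Hence $A_1\rtpd A_2$ is a projection band in $E\rtpd F$, which completes the proof.

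I expect the main obstacle to be the first step, the identification of a projection band with a principal band through the weak-unit argument, since the remaining steps are essentially invocations of Theorem \ref{pricband} and of the Dedekind-completeness facts. The one point requiring care there is that the band generated by $e_1$ computed inside $A_1$ agrees with the band generated by $e_1$ in $E$; this holds because $A_1$, being a band, is order closed and regular in $E$, so that the relevant suprema $\bigvee_n (|x|\wedge n|e_1|)$ coincide whether taken in $A_1$ or in $E$.
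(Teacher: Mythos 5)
Your proof is correct and follows essentially the same route as the paper: the paper likewise sets $A_1=B_{P_{A_1}(e)}$ and $A_2=B_{P_{A_2}(f)}$ and then invokes Theorem \ref{pricband}. The only difference is that you supply the details the paper leaves implicit, namely the verification that $P_{A_1}(e)$ is a weak unit of $A_1$ (so that $A_1$ is indeed the principal band it generates) and the final appeal to Dedekind completeness of $E\rtpd F$ to pass from band to projection band.
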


\begin{proof}
If $A_1$ is a projection band in $E$, then $A_1 = B_{P_{A_1}(e)}$, the principal band in $E$ generated by $P_{A_1}(e)$. $A_2$ is then the principal band in $F$ generated by $P_{A_2}(f)$. It follows that $$A_1 \rtpd A_2 = B_{P_{A_1}(e)} \rtpd B_{P_{A_2}(f)}.$$
This together with Theorem \ref{pricband} completes our proof.
\end{proof}

As every band in a Dedekind complete Riesz space is a projection band (see \cite[Theorem 12.2]{zaanen2012introduction}), the next corollary follows immediately.

\begin{corollary}
Let $E$ and $F$ be two Dedekind complete Riesz spaces with weak units $e$ and $f$ respectively. If $A_1$ and $A_2$ are  bands in $E$ and $F$, respectively, then $A_1\rtpd A_2$ is a projection band in $E\rtpd F$.
\end{corollary}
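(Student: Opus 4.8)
The plan is to reduce the statement directly to the immediately preceding corollary by observing that Dedekind completeness upgrades every band to a projection band. The key fact I would invoke is \cite[Theorem 12.2]{zaanen2012introduction}: in a Dedekind complete Riesz space every band is a projection band. Since $E$ is Dedekind complete and $A_1$ is a band in $E$, it follows that $A_1$ is in fact a projection band in $E$; likewise $A_2$ is a projection band in $F$.

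With $A_1$ and $A_2$ now recognized as projection bands, and since $E$ and $F$ carry weak units $e$ and $f$, the hypotheses of the previous corollary are satisfied verbatim. Applying it yields that $A_1 \rtpd A_2$ is a projection band in $E \rtpd F$, which is exactly the desired conclusion. For completeness, recall the mechanism behind that corollary: a projection band $A_1$ in $E$ coincides with the principal band $B_{P_{A_1}(e)}$ generated by the band-projection image $P_{A_1}(e)$ of the weak unit, and similarly $A_2 = B_{P_{A_2}(f)}$, so that $A_1 \rtpd A_2 = B_{P_{A_1}(e)} \rtpd B_{P_{A_2}(f)}$; Theorem \ref{pricband} then identifies this with the principal band in $E \rtpd F$ generated by $P_{A_1}(e) \otimes P_{A_2}(f)$, which is a projection band because $E \rtpd F$ is Dedekind complete by Theorem \ref{tensded}.

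There is essentially no obstacle here: the only substantive step is the appeal to \cite[Theorem 12.2]{zaanen2012introduction} to promote ``band'' to ``projection band'', after which the result is immediate from the preceding corollary. The single point worth flagging is that the weak-unit hypotheses on $E$ and $F$ are indispensable, since they are precisely what allow projection bands to be represented as principal bands and thus place the situation within the scope of Theorem \ref{pricband}.
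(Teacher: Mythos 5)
Your proposal is correct and follows exactly the paper's route: the paper derives this corollary immediately from the preceding one by invoking \cite[Theorem 12.2]{zaanen2012introduction} to upgrade the bands $A_1$ and $A_2$ to projection bands, which is precisely your argument. Your additional unwinding of the mechanism (representing the projection bands as principal bands via the weak units and applying Theorem \ref{pricband}) is consistent with, and merely elaborates, what the paper leaves implicit.
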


\bibliographystyle{plain}
\bibliography{biblio}
\end{document}